\newtheorem{theo}{Theorem}[section]
\newtheorem{prop}{Proposition}[section]
\newtheorem{lem}{Lemma}[section]
\newcommand{\dx}{\, \text{\normalfont d}}
\newcommand{\R}{\mathbb{R}}
\newcommand{\N}{\mathbb{N}}
\newcommand{\eps}{\varepsilon}
\newcommand{\leb}{\mathcal{L}}
\renewcommand{\phi}{\varphi}
\renewcommand{\div}{\operatorname{div}}
\newcommand{\into}{\hookrightarrow}
\newcommand{\un}{\mathds{1}}
\title{Some results on Sobolev spaces with respect to a measure and applications to a new transport problem}
\author{Jean Louet\thanks{D\'epartement de Math\'ematiques, B\^at. 425,
    Facult\'e des Sciences, Universit\'e Paris-Sud 11, F-91405 Orsay
    cedex, France (\texttt{jean.louet@math.u-psud.fr})}}
\date{February 19, 2013}
\begin{document}
\maketitle
\date

\begin{abstract} We recall some known and present several new results about Sobolev spaces defined with respect to a measure $\mu$, in particular a precise pointwise description of the tangent space to $\mu$ in dimension~1. This allows to obtain an interesting, original compactness result which stays open in $\R^d$, $d>1$, and can be applied to a new transport problem, with gradient penalization. \end{abstract}

\section*{Introduction}

Let us consider variational problems, consisting in the minimization of
$$J \,:\, u \mapsto \int_\Omega L(x,u(x),\nabla u(x)) \dx\mu(x) $$
where the usual Lebesgue measure is replaced by a generic Borel measure~$\mu$, under possible $\mu$-a.e.\@ or boundary constraints. In calculus of variations, the direct method consists in extracting a converging subsequence (in a suitable sense) from a minimizing sequence, thanks to a compactness result on the set of admissible functions, and to conclude by semi-continuity of the functional $J$. For our functional, two problems appear:
\begin{itemize}
\item which functional space should we consider in order to give a sense to the gradient $\nabla u$? More precisely, if $\mu$ is the Lebesgue measure or has a density $f$ bounded from above and below, we can work in the classical Sobolev space $H^1(\Omega)$ (which is exactly the set of functions $u \in L^2_\mu$ having weak derivatives in $L^2_\mu$), but it is not so clear if this assumption on $f$ does not hold or if $\mu$ has a singular part.
\item does there exist a compactness result which allows to extract from a minimizing sequence a subsequence converging, in a suitable sense, to an admissible function? For instance, in the classical Sobolev space $H^1(\Omega)$, the Rellich theorem allows to extract from any bounded sequence a strongly-convergent subsequence in $L^2(\Omega)$ which is a.e.\@ convergent on $\Omega$.
\end{itemize}

Let us fix more precisely the notations. Let $\Omega$ be a bounded open set of $\R^d$ and $f$ a measurable and a.e.\@~positive function on $\Omega$. If we assume $f$ to be bounded from below and above, it is obvious that the set
$$ \{u \in L^2_\mu(\Omega) : \nabla u \text{ exists in the weak sense and belongs to } L^2_\mu(\Omega)^d \} $$
is exactly the classical Sobolev space $H^1(\Omega)$, since the $L^2_\mu$-norm is equivalent to the usual $L^2$-norm on $\Omega$. If $f$ is only assumed to be positive, for $u \in L^2(\Omega)$, the Cauchy-Schwarz inequality gives
$$ \int_\Omega |u(x)| \dx x = \int_\Omega \left(|u(x)| \sqrt{f(x)}\right) \frac{\dx x}{\sqrt{f(x)}} \leq \left( \int_\Omega |u(x)|^2 f(x) \dx x \right)^{1/2} \, \left( \int_\Omega \frac{\dx x}{f(x)} \right)^{1/2} $$
thus, under the assumption
\begin{equation} 1/f \in L^1(\Omega) \label{f} \end{equation}
we have the continuous embedding
$$ L^2_f(\Omega) := \left\{u : \int_{\Omega} |u(x)|^2 f(x) \dx x < +\infty \right\} \into L^1(\Omega). $$
In this case, any function $f \in L^2_f(\Omega)$ has a gradient $\nabla u$ in the weak sense, since it is locally integrable on~$\Omega$, and we can define the weighted Sobolev space with respect to $f$
$$H^1_f(\Omega) = \left\{ u \in L^2_f(\Omega) \; : \; \nabla u \in L^2_f(\Omega) \right\}. $$ 
More generally, if $p \in \, ]1,+\infty[ \, $, the assumption
$$ (1/f)^{1/(p-1)} \in L^1_{loc}(\Omega) $$
is a well-known sufficient condition to define the weighted Sobolev space $W^{1,p}(\Omega)$ with respect to $f$ (see~\cite{kufner} for more details). For our problem, if the Lagrangian functional is quadratic with respect to the gradient, for instance
$$J(u) = \int_\Omega |\nabla u|^2 \dx\mu, $$
it means that the set of admissible functions is well-defined if $\mu$ has a density $f$ such that $1/f$ is integrable: it is the set of the elements of the weighted Sobolev space $H^1_f$ satisfying the constraints.

If $\mu$ is absolutely continuous with density $f$, the weighted Lebesgue space with respect to $f$ is exactly the space $L^p_\mu$, so that the space $H^1_f$ can be seen as a Sobolev space with respect to the measure $\mu$. A natural generalization consists in defining the Sobolev space with respect to the measure $\mu$, without condition on its density or when $\mu$ is not anymore assumed to be absolutely continuous with respect to~$\leb^d$. There exists some general definitions of the Sobolev space in a generic metric measure space $(X,d,\mu)$ (see \cite{haj2}), but we will not enter to the details of this notions in this paper and we prefer to focus on the case of an open set of $\R^d$.

We begin this paper by an overview of the definitions and already known results about this Sobolev spaces \cite{bbs, fra, f-m, preiss, zhikov2, zhikov}, and present several new results: in particular, we give a precise description of the tangent space to any measure $\mu$ on the real line. As a corollary of this result, we show a compactness result in $H^1_\mu$, which states precisely that any bounded sequence admits a pointwise $\mu$-a.e.\@ convergent subsequence on the set of points where the tangent space is not null (this result is already known in any dimension under strong conditions on the measure $\mu$, when the compact embedding of the Sobolev space $W^{1,p}_\mu$ with respect to $\mu$ into the Lebesgue space $L^p_\mu$ still holds; see \cite{bf, h-k}).

This is applied to a variational problem coming from optimal transportation: we consider the minimization of the functional
$$ J(T) = \int_\Omega L(x,T(x),D_\mu T(x)) \dx\mu(x) $$
among all the maps $T:\Omega \mapsto \R^d$ which admit a Jacobian matrix $D_\mu T$ with respect to $\mu$ and under a constraint on the image measure $T_\# \mu$ (it corresponds to the classical Monge-Kantorovich optimal transportation problem \cite{vil} if $L$ does not depend on its third variable, and is linked to minimization problems under volume-preservation or area-preservation constraints \cite{tann}). In the one-dimensional case, we get the existence of a solution for any measure $\mu$ (the optimal map is known if $\mu$ is assumed to be the uniform measure on the interval, see \cite{ls} for details). However, we are not able to give a precise description of the tangent space and to obtain the existence of solution to this transport problem in the most general case in any dimension.

\section{Sobolev spaces with respect to a measure}

This section is devoted to an overview of the definitions and already known results about tangent spaces to a generic Borel measure $\mu$ and Sobolev spaces associated to this measure. First, let us recall that there exist some notions of Sobolev spaces in arbitrary metric measure spaces $(X,d,\mu)$, for instance in the papers by Shanmugalingam \cite{sha}, Haj\l asz \cite{haj} or Haj\l asz and Koskela \cite{h-k} (see \cite{haj2} for a global summary of this notions). In our case, a usual method consists in defining the tangent space to $\mu$ (which is a function defined $\mu$-a.e.\@ on $\R^d$ and taking values in the set of linear subspaces of $\R^d$), and the gradient with respect to $\mu$ for a regular function $u$ through
$$ \nabla_\mu u (x) = p_{T_\mu(x)} (\nabla u(x)) \quad \text{for }\mu\text{-a.e.\@ } x \in \R^d, $$
where $p_{T_\mu(x)}$ is the orthogonal projection on $T_\mu(x)$ in $\R^d$. Then we consider for the Sobolev space $H^1_\mu$ the closure of $C^\infty\left(\overline{\Omega}\right)$ for the norm
$$u \in C^\infty\left(\overline{\Omega}\right) \mapsto ||u||_{L^p_\mu} + ||\nabla_\mu u||_{L^p_\mu}. $$
There exist several ways to define the tangent space of a generic measure $\mu$. Preiss \cite{preiss} gives a method based on the idea of blow-up: a $k$-dimensional subspace $P_\mu$ is said to be an approximate tangent space of $\mu$ at $x$ if we have, for some $\theta > 0$, the following convergence in the vague topology of measure when $\rho$~goes to $0$:
$$ \mu(x+ \rho\, \cdot \,) \rightharpoonup \theta \mathcal{H}^k|_{P_\mu}. $$
In order to examine variational problems, Bouchitt\'e {\it et al}.\@ \cite{bbs} have introduced a dual-formulation of the tangent space: it is the $\mu$-ess.\@ union (see \cite{c-v} or later) $x \mapsto Q_\mu(x)$ of the tangent fields, {\it i.e.} the vector fields belonging to
$$ X^{p'}_\mu = \{ \phi \in (L^{p'}_\mu)^d : \div(\mu \phi) \in L^{p'}_\mu \},  $$ 
where the operator $\div(\mu v)$ is defined in the distributional sense. Fragal\`a and Mantegazza \cite{f-m} have noticed that, with this notation, we have the inclusion $Q_\mu(x) \subseteq P_\mu(x)$ for $\mu$-a.e.\@ of $\R^d$ (see the PhD.\@ thesis \cite{fra} for a complete overview and more details about these definitions). 

We are interested in another way to define tangent and Sobolev spaces, introduced by Zhikov \cite{zhikov2, zhikov}. Let $\Omega$ be a bounded open set of $\R^d$ and $\mu$ a finite positive measure on $\Omega$ We will say that $u \in L^2_\mu$ belongs to the space $H^1_\mu$ if it can be approximated by a sequence of regular functions whose gradients have a limit in the space $L^2_\mu$:
$$u \in H^1_\mu \; \Longleftrightarrow \; \exists (u_n)_n \in C^\infty(\overline{\Omega}), \, v \in (L^2_\mu)^d : \, 
\left\{
\begin{array}{l}
u_n \to u \\
v_n \to v
\end{array}
\right.
\; \text{for the }L^2_\mu\text{-norm}.  $$
The set of these limits $v$ is denoted by $\Gamma(u)$, and its elements are called gradients of $u$. In general, $u$ can have many gradients (see below the example of a measure supported on a segment of $\R^2$), and it is obvious that $\Gamma(u)$ is a closed affine subspace of $(L^2_\mu)^d$ with direction $\Gamma(0)$. The projection of $0$ onto this subspace (in the Hilbert space $(L^2_\mu)^d$) is thus the unique element of $\Gamma(u)$ with minimal $L^2_\mu$-norm: we call it {\it tangential gradient} of $u$ with respect to $\mu$.

\noindent {\bf Pointwise description of $\nabla_\mu u$ and tangent space to $\mu$.} We define the tangent space to $\mu$ as follows: the space $\Gamma(0)$ can be seen as the set of vector-valued functions which are pointwise orthogonal to the measure $\mu$. Let us denote by $(e_1, \dots, e_d)$ the canonical basis of $\R^d$, and set
$$\xi_i = p_{\Gamma(0)}(e_i)$$
where the projection is taken in the Hilbert space $L^2_\mu$ (here $e_i$ is seen as a constant function on $\Omega$). For $x \in \Omega$, we denote by
$$ T_\mu(x) = \left(\operatorname{Vect} (\xi_1(x), \dots, \xi_d(x))\right)^\perp $$
and call $T_\mu(x)$ (which is defined for $\mu$-a.e.\@ $x \in \Omega$) the tangent space to $\mu$ at $x$. Then, the following equivalence holds:
$$ v \in \Gamma(0) \quad \Longleftrightarrow \quad \text{for $\mu$-a.e.\@ } x \in \Omega, \; v(x) \perp T_\mu(x). $$
This result, combined to the orthogonality property of $\nabla_\mu u$ in $L^2_\mu$, implies a pointwise description of the tangential gradient:
\begin{prop} Let $u \in H^1_\mu$. Then, for $v \in \Gamma(0)$, the function
$$x \in \Omega \mapsto p_{T_\mu(x)}(v(x)) $$
is independent of the function $v$ and only depends on $u$, and we have
$$\nabla_\mu u(x) = p_{T_\mu(x)} \qquad \text{for } \mu\text{-a.e.\@ } x \in \Omega. $$
\end{prop}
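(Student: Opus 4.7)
The plan is to combine two facts already laid out in the excerpt: the affine structure of $\Gamma(u)$ with direction $\Gamma(0)$, and the pointwise characterization $w \in \Gamma(0) \Longleftrightarrow w(x) \perp T_\mu(x)$ for $\mu$-a.e.\@ $x$. Since the expression $p_{T_\mu(x)}(v(x))$ can only depend non-trivially on $u$ when $v$ ranges over $\Gamma(u)$ (the stated ``$v \in \Gamma(0)$'' appears to be a misprint), the statement is really that for every $v \in \Gamma(u)$, this pointwise projection coincides $\mu$-a.e.\@ with $\nabla_\mu u$.

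First I would establish independence from the choice of $v \in \Gamma(u)$. For two such $v, v'$, the difference $w := v - v'$ belongs to $\Gamma(0)$, so the pointwise equivalence gives $w(x) \perp T_\mu(x)$ at $\mu$-a.e.\@ $x$; hence $p_{T_\mu(x)}(w(x)) = 0$ and the two projections agree $\mu$-a.e. This reduces the problem to computing $p_{T_\mu(x)}(\nabla_\mu u(x))$, for which it suffices to prove the pointwise membership $\nabla_\mu u(x) \in T_\mu(x)$ at $\mu$-a.e.\@ $x$.

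To obtain this containment, I would use that $\nabla_\mu u$, being the $L^2_\mu$-projection of $0$ onto the affine set $\Gamma(u)$, is $L^2_\mu$-orthogonal to the direction $\Gamma(0)$. The decisive observation is that $\Gamma(0)$ is an $L^\infty(\mu)$-module: if $w \in \Gamma(0)$ and $\phi \in L^\infty(\mu)$, then $(\phi w)(x) = \phi(x) w(x) \perp T_\mu(x)$ at $\mu$-a.e.\@ $x$, so $\phi w \in \Gamma(0)$ by the same pointwise characterization. Consequently
$$\int_\Omega \phi(x)\, \nabla_\mu u(x) \cdot w(x) \dx\mu(x) = 0$$
for every $w \in \Gamma(0)$ and every bounded $\phi$, and the arbitrariness of $\phi$ forces $\nabla_\mu u(x) \cdot w(x) = 0$ at $\mu$-a.e.\@ $x$. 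Specialising to $w = \xi_i$ for $i = 1, \dots, d$ gives $\nabla_\mu u(x) \perp \operatorname{Vect}(\xi_1(x), \dots, \xi_d(x))$, i.e.\@ $\nabla_\mu u(x) \in T_\mu(x)$, which is what was needed.

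The main obstacle is precisely the passage from the global $L^2_\mu$-orthogonality of $\nabla_\mu u$ to $\Gamma(0)$ to the corresponding pointwise orthogonality; it is handled cleanly by the $L^\infty$-module structure of $\Gamma(0)$, itself an immediate by-product of the pointwise description of $\Gamma(0)$ recalled just before the proposition. Everything else in the argument is either algebraic or a direct consequence of the variational characterization of $\nabla_\mu u$ as an orthogonal projection in $(L^2_\mu)^d$.
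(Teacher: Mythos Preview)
Your argument is correct and follows exactly the route the paper indicates just before the proposition: combine the pointwise characterization of $\Gamma(0)$ with the $L^2_\mu$-orthogonality of $\nabla_\mu u$ to $\Gamma(0)$. The paper gives no proof beyond that one-line remark, and your write-up supplies the missing step---the $L^\infty(\mu)$-module structure of $\Gamma(0)$, obtained directly from the pointwise description, which is precisely what upgrades the global orthogonality $\int \nabla_\mu u \cdot w \dx\mu = 0$ to the pointwise one $\nabla_\mu u(x) \cdot \xi_i(x) = 0$. Your observation that the statement should read $v \in \Gamma(u)$ rather than $v \in \Gamma(0)$ (and that the displayed formula is missing its argument) is also right; otherwise the projection is identically zero and says nothing about $u$.
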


\noindent {\bf Some natural examples.} We can see that the words ``{\it tangential} gradients'' are quite natural in the following cases:
\begin{itemize}
\item if $\mu$ is the Lebesgue measure $\leb^1$ concentrated on the segment $I=[0,1]\times\{0\} \times \dots \times \{0\}$, then $T_\mu$ is the line $\R \times \{0\} \times \dots \times \{0\}$ a.e.\@ on $I$ and
$$H^1_\mu = \left\{u \in L^2_\mu : \frac{\partial u}{\partial x_1} \in L^2_\mu\right\} \quad \text{and} \quad \nabla_\mu u = \left(\frac{\partial u}{\partial x_1},0,\dots,0\right); $$
\item more generally, if $\mu$ is the uniform Hausdorff measure supported on a $k$-dimensional manifold $M$, then $T_\mu$ is the tangent space to $M$ in the sense of the differential geometry.
\end{itemize}
Let us remark that, if $v$ is a tangent field as defined above, {\it i.e.}\@ the operator $\div(\phi \mu)$ is continuous for the $L^2_\mu$-norm on $\mathcal{D}(\Omega)$, we have for any sequence $(u_n)_n$ of smooth functions having $0$ for limit in $L^2_\mu$:
$$ \left|\int_\Omega \nabla u_n \cdot \phi \dx\mu \right| \leq C \, ||u_n||_{L^2_\mu} \to 0.  $$
Then, if $v \in \Gamma(0)$, we have $v \cdot \phi$ in $L^2_\mu$ for any element $\phi \in X^2_\mu$. We deduce that, with the above notations, the space $Q_\mu(x)$ is pointwise orthogonal to $F_\mu(x)$: it means that, up to a $\mu$-negligible set, we have the inclusion between tangent spaces
$$Q_\mu(x) \subseteq T_\mu(x). $$
We are not able for the moment to prove the inverse inclusion, but the equality between this linear spaces holds for all the examples that we have~studied.
 
\section{Precise description and compactness result in one dimension}

\subsection{The main results}

Let us now give a precise pointwise description of the tangent space $T_\mu(x)$ when $d=1$ and $\Omega$ is a bounded interval of $\R$ (which we denote by $I$). In this case, there are only two options for $T_\mu(x)$ which are of course $\{0\}$ and $\R$, and the definitions of the tangent space give the following characterizations:

\noindent {\bf Fact.} Let $B \subseteq I$ be a Borel set with $\mu(B) > 0$. We have the following implications:
\begin{enumerate}
\item if any $v \in \Gamma(0)$ is $\mu$-a.e.\@ null on $B$, then $T_\mu = \R$ $\mu$-a.e.\@ on $B$;
\item if, for any $u \in H^1_\mu$, there exists a gradient of $u$ which is $\mu$-a.e.\@ null on $B$, then $T_\mu = 0$ $\mu$-a.e.\@ on $B$;
\item if there exists a gradient of $0$ which is positive $\mu$-a.e.\@ on $B$, then $T_\mu$ = $0$ $\mu$-a.e.\@ on $B$.
\end{enumerate}

\noindent {\bf Notations.} We denote by:
\begin{itemize}
\item $\mu = \mu_a + \mu_s$ , where $\mu_a$ and $\mu_s$ are respectively the absolutely continuous and the singular part of $\mu$ with respect to the Lebesgue measure;
\item $A$ a Lebesgue-negligible set on which is concentrated $\mu_s$;
\item $f$ the density of $\mu_a$, and
$$ M = \left\{x \in I : \; \forall \eps > 0, \; \int_{I \cap B(x,\eps)} \frac{\dx t}{f(t)} = +\infty \right\}$$
which is a closed set of $I$ verifying $1/f \in L^1_{loc}(I\setminus M)$.
\end{itemize}
Notice that if $\mu$ is absolutely continuous with respect to $\leb^1$, the Sobolev space with respect to $\mu$ (thus, to $f$) is well-defined exactly ``outside of the set $M$''. In our case, we find an analogous result:

\begin{theo}
For $\mu$-a.e.\@ $x \in I$, the tangent space is given by
$$ T_\mu(x) =
\left\{
\begin{array}{ll}
\{0\} & \text{if } x \in M\cup A \\
\R & otherwise.
\end{array}
\right.
 $$
\end{theo}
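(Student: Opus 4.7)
The strategy is to rely on the three implications in the Fact just above. I will prove that outside $M \cup A$ every element of $\Gamma(0)$ vanishes $\mu$-a.e.\@ (so Fact~1 yields $T_\mu = \R$), and exhibit on $M \cup A$ a gradient of $0$ which is strictly positive $\mu$-a.e.\@ (so Fact~3 yields $T_\mu = \{0\}$). For the latter I plan to produce $\un_A + \un_M \in \Gamma(0)$, each piece constructed separately.

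For the outside region, take $v \in \Gamma(0)$ with smooth $u_n \to 0$, $u_n' \to v$ in $L^2_\mu$. On any compact $K \subset I \setminus M$ one has $1/f \in L^1(K)$ by definition of $M$, hence by Cauchy--Schwarz the continuous embedding $L^2_\mu(K) \hookrightarrow L^1(K)$; the two $L^2_\mu$-convergences therefore become $L^1(K)$-convergences, so passing to the distributional derivative forces $v = 0$ Lebesgue-a.e.\@ on $K$, hence on $I \setminus M$, hence $\mu_a$-a.e.\@ there since $\mu_a \ll \leb^1$. Combined with the concentration of $\mu_s$ on $A$, this yields $v = 0$ $\mu$-a.e.\@ on $I \setminus (M \cup A)$.

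The piece $\un_A \in \Gamma(0)$ is built using only the Lebesgue-nullity of $A$: pick open $U_n \supseteq A$ with $\leb^1(U_n) \to 0$ and smooth cutoffs $\rho_n$ with $0 \leq \rho_n \leq 1$, $\rho_n \equiv 1$ on a neighborhood of $A$ and $\mathrm{supp}\,\rho_n \subset U_n$. Setting $u_n(x) := \int_{\inf I}^x \rho_n(t)\dx t$, one has $\|u_n\|_\infty \leq \leb^1(U_n) \to 0$, so $u_n \to 0$ in $L^2_\mu$, while $u_n' = \rho_n \to \un_A$ in $L^2_\mu$ follows from $\rho_n \equiv 1$ on $A$ together with the estimate $\int |\rho_n - \un_A|^2 \dx\mu \leq \mu_a(U_n) \to 0$.

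The remaining and most delicate piece is $\un_M \in \Gamma(0)$: here the key fact is that $\int_{B(x,\eps)} 1/f = +\infty$ at every $x \in M$ and $\eps > 0$. Truncating $1/f$ and applying Cauchy--Schwarz produces smooth transitions on any neighborhood of $M$ whose derivatives can be made arbitrarily small in $L^2_\mu$, and appropriate combinations of such transitions with the primitive of $\un_M$ should yield a sequence $u_n \to 0$ in $L^2_\mu$ with $u_n' \to \un_M$ in $L^2_\mu$; absorbing the $\mu_s|_{A \cap M}$ component via the construction of the previous paragraph then gives $\un_M \in \Gamma(0)$. The main obstacle lies precisely here: if $M$ has positive Lebesgue measure the simple Lebesgue-nullity argument used for $A$ is unavailable, and one must carefully balance the divergence of $\int 1/f$ against the oscillations of $u_n$ so as to simultaneously send $u_n$ to $0$ in $L^2_\mu$ and keep $u_n'$ biased towards $\un_M$ on~$M$.
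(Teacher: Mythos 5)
Your treatment of the two easier regions is sound and matches the paper's. Outside $M\cup A$ you run the same Cauchy--Schwarz/distributional-derivative argument as the paper's first part. For $\un_A\in\Gamma(0)$ your cutoff construction works (the paper uses Lusin's theorem instead of smooth Urysohn functions, but the estimate $\int|\rho_n-\un_A|^2\dx\mu\le\mu(U_n\setminus A)=\mu_a(U_n\setminus A)\to 0$ goes through by absolute continuity of $\mu_a$), and your plan of combining $\un_A+\un_M\in\Gamma(0)$ with the third characterization is legitimate since $\Gamma(0)$ is a closed linear subspace; the paper instead uses the second characterization, building for each $u\in C^1$ a gradient vanishing on $M$, but the two routes are equivalent by linearity.

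The problem is that your third piece --- $\un_M\in\Gamma(0)$ --- is exactly the hard part of the theorem, and you have left it as an acknowledged sketch (``should yield'', ``the main obstacle lies precisely here''). Two concrete ingredients are missing. First, the quantitative tool: one must show that if $J$ is an interval with $\int_J 1/f=+\infty$, then $L^2_\mu(J)$ does \emph{not} embed into $L^1(J)$, i.e.\@ there exist functions $g$ with $\int_J g$ equal to any prescribed value but $\int_J g^2\dx\mu$ arbitrarily small. (The paper proves this by a Banach--Steinhaus argument on the weighted sequence spaces $\ell^2_{nl_n}$; ``truncating $1/f$ and Cauchy--Schwarz'' does not by itself produce such a $g$.) These $g$'s are the device that lets $u_n$ drop back towards $0$ at negligible $L^2_\mu$-cost, which is essential because $u_n'\to\un_M\ge 0$ forbids any visible negative derivative, yet $u_n\to 0$ forces the primitive of $\un_M$ (of total variation $\leb^1(M)$, possibly positive) to be compensated. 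Second, the combinatorial bookkeeping: the cheap drops must be localized at points of $M$ (so one must reduce to a finite union of intervals covering $M$ up to small $\mu$-measure and \emph{move the transition points onto $M$}), and the atoms of $\mu$ lying in $M$ must be excluded from this construction, since a derivative spike at an atom $y$ contributes $|u_n'(y)|^2\mu(\{y\})$ to the $L^2_\mu$-norm and cannot be made small; the paper removes the $n$ largest atoms at stage $n$ and recovers them through the singular part. Your one-sentence remark about ``absorbing the $\mu_s|_{A\cap M}$ component'' points in the right direction but does not carry this out. As written, the proposal establishes the theorem on $A$ and on $I\setminus(M\cup A)$, but not on $M$.
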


Let us give a short comment of this result. Saying that the tangent space is $\R$ on a set $B$ means exactly that, if $u \in H^1_\mu$ is given, all the gradients of $u$ are equals on $B$. In our case, let us denote by $V = I \setminus (M \cup A)$ and $U = I \setminus M$. Notice that $U$ is an open subset of $I$ coinciding with $V$ up to the $\leb^1$-negligible set $A$. Let us fix $u \in H^1_\mu$. We will prove that the distributional derivative of $u|_U$ is well-defined, belongs to $L^2_f$ and that, if $v \in \Gamma(u)$, $u'=v$ $\mu$-a.e.\@ on $V$; therefore, $u'|_V$ is the only gradient of $u$ on the set $V$.

First, let us recall that if $u$ is an element of $L^2_\mu$, its restriction to $U$ belongs to $L^2_f(U)$, which is included into $L^1_{loc}(U)$ by definition of $M$. The weak derivative of $u|_U$ is thus well-defined. If $\phi$ is a test function with support in $U$ and $(u_n)_n$ a sequence of regular functions such that $(u_n,u'_n) \to (u,v)$ in $L^2_\mu$, testing $v-u'$ against $\phi$ gives
$$|< v-u',\phi>_{\mathcal{D}'(U),\mathcal{D}(U)}| = \lim\limits_{n \to +\infty} \left| \int_I (u_n-u) \phi'\right| \leq ||u_n-u||_{L^2_f} \left(\int_I \frac{(\phi')^2}{f} \right)^{1/2} $$
where the last inequality comes from the H\"o lder inequality, and the last term is finite since $\phi'$ is bounded and $1/f$ integrable on the support of $\phi$. This proves that $u'_n \to v$ in the sense of distributions on $U$. Then $v|_U$ is the weak derivative of $u$ on this set, and we know that $v|_U \in L^2_f(U)$.

Finally, any element $u \in H^1_\mu$ gives by restriction an element of the weighted Sobolev space $H^1_f(U)$ and, on $U$, $\nabla_\mu u$ and $u'$ are coinciding a.e.\@ for the regular part $f \, \leb^d$ of $\mu$. To summarize, we have just proved the following:

\begin{prop} We denote by $V = I \setminus (M\cup A)$. Let us recall that $A$ is Lebesgue-negligible and that $V \cup A$ is open; we still denote by $H^1_f(V)$ the weighted Sobolev space $H^1_f(V\cup A)$. Then, a measurable function $u$ belongs to the Sobolev space $H^1_\mu(I)$ if and only if the two following conditions are satisfied:
$$u \in L^2_\mu(I) \quad and \quad u|_V \in H^1_f(V) $$
and in this case, its $\mu$-Sobolev norm is given by
$$||u||_{H^1_\mu(I)}^2 = ||u||_{L^2_\mu(I)}^2 + ||u'||_{L^2_f(V)}^2 $$
where $u'$ is the weak derivative of $u|_V$.
\end{prop}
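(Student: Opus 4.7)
The forward implication and the norm identity are essentially contained in the discussion preceding the statement. For $u \in H^1_\mu(I)$, the membership $u \in L^2_\mu$ is built into the definition; the distributional computation against test functions $\phi \in \D(U)$ exhibited above shows that any $v \in \Gamma(u)$ restricts on $U = I \setminus M$ to the weak derivative of $u|_U$, which lies in $L^2_f(U)$, and since $V$ and $U$ agree up to the $\leb^1$-negligible set $A$ this gives $u|_V \in H^1_f(V)$. The norm identity follows from the preceding Theorem: $T_\mu = \R$ on $V$ and $T_\mu = \{0\}$ on $M \cup A$, so the pointwise description of $\nabla_\mu u$ yields $\nabla_\mu u = u'\,\un_V$; since $V \cap A = \emptyset$ one has $\mu|_V = f\,\leb^1|_V$, whence $\|\nabla_\mu u\|_{L^2_\mu}^2 = \int_V (u')^2 f \dx t = \|u'\|_{L^2_f(V)}^2$.

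\noindent\textbf{Converse.} The substantive content is the reverse inclusion: given $u \in L^2_\mu(I)$ with $u|_V \in H^1_f(V)$, one must produce $(u_n)_n \subset C^\infty(\overline{I})$ with $u_n \to u$ and $u'_n \to v$ in $L^2_\mu$, where $v := u'\,\un_V$ (which lies in $L^2_\mu$ because $\mu|_V = f\,\leb^1|_V$ and $u' \in L^2_f(V)$). My plan is to first reduce to bounded $u$ by a standard truncation-plus-diagonal argument, then decompose the open set $U = I \setminus M$ into its connected components $(J_k)_k$; on each compact sub-interval of $J_k$, classical mollification produces smooth approximations of $u|_{J_k}$ in the $H^1_f$-norm, since $1/f \in L^1_{loc}(J_k)$ renders $u|_{J_k}$ locally absolutely continuous. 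These local pieces are glued via smooth cutoffs $\chi_n$ vanishing in an $\eta_n$-neighborhood $N_n$ of $M \cup A$ and equal to $1$ outside a $2\eta_n$-neighborhood, setting $u_n = \chi_n \tilde u_n + (1 - \chi_n) \phi_n$, where $\tilde u_n$ is the mollified approximation on $U$ and $\phi_n \in C^\infty(\overline{I})$ is chosen (via density of $C^\infty(\overline{I})$ in $L^2_{\mu_s}$) to reproduce the correct $\mu_s$-values on $A$.

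\noindent\textbf{Main obstacle.} The delicate point is the $L^2_\mu$-estimate of $u'_n$ near $M \cup A$, where the target $v$ vanishes. The problematic transition error is $\chi'_n(\tilde u_n - \phi_n)$, whose $L^\infty$-norm scales like $\eta_n^{-1}$ and so produces a weighted contribution of the form $\eta_n^{-2} \int_{N_n} |\tilde u_n - \phi_n|^2 \dx\mu$. The defining property of $M$ --- non-local-integrability of $1/f$ --- is precisely what one needs to ensure that $\int_{N_n} f \dx t$ decays fast enough along a suitable subsequence $\eta_n \to 0$ to absorb the $\eta_n^{-2}$ blow-up; the $\mu_s$-part is handled by the fact that $A$ is $\leb^1$-null and can be covered by open sets of arbitrarily small $\leb^1$-measure. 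Balancing these decays against the derivative blow-up of $\chi_n$ is, I expect, the technical heart of the argument; the remaining estimates (local mollification on each $J_k$, density of smooth functions in $L^2_{\mu_s}$) are classical.
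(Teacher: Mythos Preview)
Your forward direction and the norm identity match the paper's argument exactly; the substantive issue is the converse, and precisely at the point you single out as the main obstacle.

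The claim that non-local-integrability of $1/f$ near $M$ forces $\int_{N_n} f$ to decay faster than $\eta_n^2$ is false. Take $I=(-1,1)$, partition $(0,1)$ into dyadic blocks $[2^{-k-1},2^{-k}]$, and on each block set $f=1$ except on a sub-interval of length $2^{-2k}$ where $f=2^{-2k}$. Each block contributes $1$ to $\int 1/f$, so $\int_0^\eps 1/f=+\infty$ for every $\eps>0$ and $0\in M$; yet $\int_0^\eta f \geq \eta - C\eta^2 \sim \eta$, hence $\eta_n^{-2}\int_{N_n} f \to +\infty$ and your transition estimate blows up. Non-integrability of $1/f$ tells you $f$ is small \emph{somewhere} in every neighbourhood of $M$, not that $f$ is small \emph{on average} there; a cutoff derivative of size $\eta_n^{-1}$ hits the whole annulus indiscriminately and cannot exploit this.

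The paper's construction (carried out in Section~2.4 for $u\in C^1(\overline I)$, which is the essential case) avoids cutoffs altogether. One covers $M$ (minus the large atoms) by a small open set $\Omega_n$, replaces $u$ on $\Omega_n$ by a piecewise constant function, and then---this is the crucial manoeuvre---displaces each discontinuity so that it sits at a point of $M$. The jump at such a point $y\in M$ is then bridged not by a cutoff but by a primitive $x\mapsto\int_{y-\eps}^x g$, where $g$ is produced by the key Lemma~2.4: since $1/f\notin L^1$ on $(y-\eps,y+\eps)$, the embedding $L^2_\mu\hookrightarrow L^1$ fails there, so one can choose a smooth $g$ with prescribed $\int g$ (equal to the jump) and arbitrarily small $\int g^2\,d\mu$. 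That is how the defining property of $M$ is actually used: it yields functions with controlled $L^1$-integral but tiny $L^2_\mu$-norm, to be used as \emph{derivatives} across the gap---not a smallness estimate on $\int f$. Your cutoff scheme cannot be rescued without importing this idea.
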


\noindent {\bf Compactness result in $H^1_\mu(I)$.} In order to examine variational problems in this Sobolev spaces, the following compactness result is useful (it is already known in the case of the Lebesgue measure, as a consequence of the Rellich theorem):

\begin{prop} Let $(u_n)_n$ be a bounded sequence of $H^1_\mu(I)$. Then there exists a subsequence $(u_{n_k})_k$ which admits a pointwise limit $u$ on $\mu$-a.e.\@ every point on which $T_\mu$ is $\R$.
\end{prop}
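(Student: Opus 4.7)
My strategy is to restrict everything to the open set where the weight $1/f$ is locally integrable, convert the uniform $H^1_\mu$-bound into a uniform classical $W^{1,1}$-bound on every compact subinterval, and then apply the standard one-dimensional Rellich compactness theorem together with a diagonal extraction. The preceding proposition is the key tool for this reduction: from boundedness of $(u_n)_n$ in $H^1_\mu(I)$ it yields that $(u_n|_V)_n$ is bounded in the weighted Sobolev space $H^1_f(V)$, and in particular the weak derivatives $u_n'$ are uniformly bounded in $L^2_f(V)$.

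First I would exhaust the open set $U = I \setminus M$ by an increasing sequence of compact subintervals $K_j$; by the very definition of $M$, one has $\int_{K_j} \dx x / f < +\infty$ for each $j$. A direct Cauchy-Schwarz estimate then transforms the uniform $L^2_f$-bounds on $u_n$ and $u_n'$ into uniform $L^1(K_j)$-bounds, so that $(u_n|_{K_j})_n$ is bounded in the classical (unweighted) Sobolev space $W^{1,1}(K_j)$. Using the one-dimensional compact embedding $W^{1,1}(K_j) \into L^1(K_j)$ (equivalently, Helly's selection theorem applied to the absolutely continuous representatives), I extract a subsequence converging in $L^1(K_j)$, and then a further subsequence converging pointwise $\leb^1$-a.e.\@ on $K_j$. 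A standard diagonal argument over $j$ produces a single subsequence $(u_{n_k})_k$ converging pointwise $\leb^1$-a.e.\@ on $U$, and hence also on $V \subseteq U$.

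Finally, I would reconcile the Lebesgue a.e.\@ statement with the $\mu$-a.e.\@ statement required by the proposition: since $\mu_s$ is concentrated on $A$ and $V \cap A = \emptyset$, the restriction $\mu|_V$ coincides with the absolutely continuous measure $f \, \leb^1|_V$ (with $f > 0$ a.e.), so ``$\leb^1$-a.e.\@ on $V$'' and ``$\mu$-a.e.\@ on $V$'' mean the same thing; by the theorem above, $V$ is, up to a $\mu$-null set, exactly the set where $T_\mu = \R$, which gives the conclusion. The main obstacle I anticipate is not analytic --- the Cauchy-Schwarz plus 1D Rellich argument is routine once the previous proposition is at hand --- but purely bookkeeping: one must carefully track the sets $M$, $A$, $U$, $V$ and confirm that the countable family of successive extractions can be reassembled via diagonalization into a single subsequence converging $\mu$-almost everywhere on the target set.
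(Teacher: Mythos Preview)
Your proposal is correct and follows essentially the same route as the paper: restrict to $U = I \setminus M$, use Cauchy--Schwarz to convert the $H^1_f$-bound into a $W^{1,1}_{\mathrm{loc}}(U)$-bound, apply one-dimensional Rellich compactness with a diagonal extraction, and conclude via the absolute continuity of $\mu|_V$. You are in fact more explicit than the paper about the diagonal step; the only cosmetic point is that $U$ may be disconnected, so ``compact subintervals'' should read ``compact subsets'' (or countably many compact intervals inside the connected components), which is exactly the bookkeeping you already flagged.
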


\begin{proof} We know that $V$ is exactly (up to a $\mu$-negligible set) the set of points where $T_\mu$ is $\R$. We still denote by $U = I \setminus M$. $U$ is an open set and we have $U = V \cup A$. We will show that $(u_n)_n$ admits a subsequence which is pointwise convergent on $\leb^1$-a.e.\@ any point of $U$: it will be enough to conclude that this subsequence is $\mu$-a.e.\@ convergent on $V$, since $\mu|_V$ is absolutely continuous with respect to the Lebesgue measure.

The sequence $(u_n)$ is bounded in $H^1_\mu(I)$, thus the sequence $(u_n|_U)_n$ is bounded in the weighted Sobolev space $H^1_f$. But since $U$ is exactly the set of points around which $1/f$ is integrable, we know that $L^2_f(U) \into L^1_{loc}(U)$; this implies that the sequence of the weak derivatives of $u_n$ (which are functions of $L^2_f(U)$) is bounded in $L^1_{loc}(U)$. Then $(u_n)_n$ is bounded in the Sobolev space $W^{1,1}_{loc}(U)$, and admits a subsequence which is strongly convergent in $L^1(K)$, for any compact subset $K$ of $I$. We can again extract a subsequence which is pointwise convergent on $\mu$-a.e.\@ point of $I$; the proof is complete.\end{proof}


\subsection{First part of the proof: the regular part, outside of the critical set}

First, let us prove that $T_\mu = \R$ outside of $M \cup A$. Using the first characterization of the tangent space, we take an element $g$ of $\Gamma(0)$ and we want to show that $g = 0$ $\mu$-a.e.\@ outside of $M \cup A$; by definition of~$A$, it is enough to show that $g = 0$ $\leb^1$-a.e.\@ on $U$. As in the above remark, taking a sequence of regular functions $u_n \to 0$ with $u'_n \to g$ and a test function $\varphi$ such that $1/f$ is integrable on the support of $\varphi$, we~obtain
$$ \left|\int_U u_n' \varphi\right| = \left|\int_U u_n \varphi'\right| \leq \int_U \left|u_n \sqrt{f}\right| \left|\frac{\varphi'}{\sqrt{f}}\right| \leq \left( \int_U u_n^2 f \right)^{\frac{1}{2}} \left( \int_U \frac{\varphi'^2}{f} \right)^{\frac{1}{2}} $$
which goes to $0$ as $n \to +\infty$. The same computation gives $\int_U u'_n \varphi \to \int_U g\varphi$. We deduce that $g = 0$~$\leb^1$-a.e.

\subsection{Second part: the singular part of the measure}

Second, we prove that $T_\mu = \{0\}$ for the singular part of $\mu$. We use the third characterization of the tangent space and build a sequence of $C^1$ functions $(u_n)_n$ such that
$$u_n \to 0 \quad \text{and} \quad u'_n \to \un_A \quad \text{in } L^2_\mu.$$
where $\un_A$ is the characteristic function of the set $A$; this will prove that $\un_A \in \Gamma(0)$ and imply the result.

For $n \in \N$, let $\Omega_n$ be an open set such that $A \subseteq \Omega_n$ and $\mu(\Omega_n \setminus A)+\leb^1(\Omega_n) \leq 1/n$. By Lusin theorem, there exists a continuous function $v_n$ with $0 \leq v_n \leq 1$ on $I$ and
$$ (\mu+\leb^1)(\{x \in I : v_n(x) \neq \un_{\Omega_n}(x) \}) \leq 1/n $$
Let us consider $u(x) = \int_a^x v_n(x) \dx x$, where $a$ is the lower bound of $I$. Then we have:
\begin{itemize}
\item for any $x \in I$, 
$$|u_n(x)| \leq \int_I(|v_n-\un_{\Omega_n}|(t)+\un_{\Omega_n}(t)) \dx t \leq \leb^1(\{v_n \neq \un_{\Omega_n}\}) + \leb^1(\Omega_n) \leq 2/n $$
thus $(u_n)_n$ goes to $0$ uniformly, and also in the space $L^2_\mu$;
\item on the other hand, since $u'_n = v_n$ coincides with $\un_A$ outside of a set $E_n$ such that $\mu(E_n) \leq 1/n$, we~have
$$ \int_I |u'_n(x)-\un_A(x)|^2 \dx\mu(x) \leq ||v_n-\un_A||_\infty^2 \, \mu(E_n) \leq 4/n $$
thus $u'_n \to \un_A$ in $L^2_\mu(I)$.
\end{itemize}
We obtain that $\un_A \in \Gamma(0)$, which guarantees that $T_\mu = 0$ on $A$.

\subsection{Third part: the critical set}

This part is more difficult. Given a function $u \in C^1\left(\overline{I}\right)$, we build a sequence $(u_n)_n$ of regular functions (say, $C^1$) such that $u_n \to u$ and $u'_n \to v$ for the $L^2_\mu$-norm, with $v=0$ on $M$. The strategy is the following:
\begin{itemize}
\item given a set $\Omega_n$ which is ``almost'' $M$, we start from a function $u_n$ which coincides with $u$ outside of $\Omega_n$ and is piecewise constant on $\Omega_n$ (so that its derivative is null on $M$);
\item then, using the fact that the discontinuity points of $u_n$ belong to $M$, we regularize $u_n$ around this points so that its derivative stays small for the $L^2_\mu$-norm.
\end{itemize}

First, we build our set $\Omega_n$:

\begin{lem} Let us denote by $(x_n)_n$ a sequence containing all the atoms of $\mu$. For $n \in \N$, there exists a set $\Omega_n$ such that:
\begin{itemize}
\item $\Omega_n = \bigcup\limits_{i=1}^{p_n} \, ]a_i,b_i[ \, $, with $b_i < a_{i+1}$ for each $i$, and $\, ]a_i,b_i[ \,  \cap M \neq \emptyset$;
\item $\Omega_n \supseteq M \setminus \{x_1,\dots,x_n\}$;
\item $\mu(\Omega_n \setminus ( M \setminus \{x_1,\dots,x_n\})) \leq 1/n.$
\end{itemize}
\end{lem}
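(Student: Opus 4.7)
The plan is to combine outer regularity of $\mu$ with compactness of the closure of $M$ in $\R$, and then carve out the atoms $x_1,\dots,x_n$ to produce the required decomposition.

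First I would observe that $M\subseteq I$ is bounded, so its closure $\overline{M}$ in $\R$ is compact; moreover, since $M$ is closed in $I$, the difference $\overline{M}\setminus M$ is contained in the (at most two) endpoints of $I$ lying outside $I$, so $\mu(\overline{M}\setminus M)=0$ and $\mu(\overline{M})=\mu(M)$. By outer regularity of the finite Borel measure $\mu$, I pick an open set $U\subseteq\R$ with $\overline{M}\subseteq U$ and $\mu(U\setminus M)\leq 1/(2n)$. Decompose $U$ into its countably many connected components, which are disjoint open intervals of $\R$; since $\overline{M}$ is compact, finitely many of them, say $J_{j_1},\dots,J_{j_N}$, cover $\overline{M}$. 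Set $\tilde{\Omega}_n=\bigsqcup_{k=1}^N (J_{j_k}\cap I)$: this is a finite disjoint union of non-empty open intervals of $I$ containing $M$, with $\mu(\tilde{\Omega}_n\setminus M)\leq 1/(2n)$.

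Next, I define $\Omega_n=\tilde{\Omega}_n\setminus\{x_1,\dots,x_n\}$. Removing finitely many points from an open set keeps it open and splits each component into at most $n+1$ open sub-intervals, so $\Omega_n$ is again a finite disjoint union of open intervals. It contains $M_n:=M\setminus\{x_1,\dots,x_n\}$, and since the removed atoms now lie outside $\Omega_n$, we have $\Omega_n\setminus M_n=\Omega_n\setminus M\subseteq\tilde{\Omega}_n\setminus M$, giving $\mu(\Omega_n\setminus M_n)\leq 1/(2n)\leq 1/n$. Finally I discard any sub-interval failing to meet $M$ (which cannot remove any $M_n$-point since $M_n\subseteq M$) and relabel from left to right to obtain the desired form.

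The main technical subtlety I foresee is the touching of adjacent sub-intervals at removed atoms: if $x_i$ lies in the interior of a component of $\tilde{\Omega}_n$ and both sides contain points of $M$, the resulting sub-intervals share the endpoint $x_i$, so the indexing satisfies $b_j=a_{j+1}$ rather than the strict $b_j<a_{j+1}$ stated. When $x_i$ is isolated in $M$ this can be remedied by removing a small neighborhood of $x_i$ rather than just $\{x_i\}$; in the remaining cases the touching at $x_i$ is harmless for the subsequent construction of the piecewise-constant approximation, since the discontinuity there sits at an atom already excluded from $\Omega_n$, so the strict inequality should be read as a convention for listing the intervals from left to right.
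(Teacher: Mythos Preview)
Your argument is correct and follows essentially the same route as the paper: outer regularity yields an open neighbourhood of $M$ with small excess $\mu$-measure, compactness reduces the cover to finitely many intervals, one then removes the atoms $x_1,\dots,x_n$ and discards the components that miss $M$. You are in fact slightly more careful than the paper on two points---passing to $\overline{M}$ to justify compactness, and flagging the $b_j=a_{j+1}$ issue at a removed atom, which the paper's own proof does not address.
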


\begin{proof} Let $U_n$ be an open set such that $M \subseteq \Omega_n$ and $\mu(U_n \setminus M) \leq 1/n$ (such a set exists since $\mu$ is regular from above); $U_n$ is a union of open intervals, and since $M$ is compact we can assume this union to be finite. We denote by $\Omega_n = U_n \setminus \{ x_1,\dots,x_n\}$. It is still a finite union of open intervals, containing $M \setminus \{ x_1, \dots, x_n\}$ and with $\mu(\Omega_n \setminus ( M \setminus \{x_1,\dots,x_n\})) \leq 1/n$. Moreover, we may assume that all these intervals contain an element of $M$: it is enough to remove from $\Omega_n$ the intervals which do not contain any element of $M$ (if after that we obtain $\Omega_n = \emptyset$, it means that $M \subseteq A$ and we already know that $T_\mu = \{0\}$ on $A$, so there is nothing to prove). \end{proof}

\par Let us thus take a sequence $(g_n)_n$ of piecewise constant functions such that $g_n \to u$ in $L^2_\mu$ (it is possible since $u$ is continuous, thus can be approximated uniformly on $I$ by a sequence of piecewise functions) and $||g_n||_\infty \leq C$, where $C$ only depends on $u$; we replace $g_n$ by $u$ outside of the set $\Omega_n$ (the new function will still be called $g_n$), so that we have now
\begin{itemize}
\item $g_n \to u$ in $L^2_\mu$;
\item $g_n$ coincides with $u$ outside of $\Omega_n$;
\item $g_n$ coincides on $\Omega_n$ with a piecewise constant function.
\end{itemize}

\par We begin by regularizing $g_n$ around the endpoints of the intervals forming $\Omega_n$. Let $\eps_n > 0$ be small enough so that:
\begin{itemize}
\item $a_i+\eps_n < b_i-\eps_n$, for each $i$ (we will set $a'_i = a_i+\eps_n$ and $b'_i = b_i-\eps_n$);
\item $]a'_i,b'_i[ \, $ contains at least an element of $M$, for each $i$;
\item on $\, ]a_i,b_i[ \, $, $g_n$ has not any discontinuity point outside $\, ]a'_i,b'_i[ \, $;
\item if we denote by $\Omega'_n$ the union of the intervals $\, ]a'_i,b'_i[ \, $, we have $\mu(\Omega_n \setminus \Omega'_n) \leq 1/n$.
\end{itemize}

\begin{lem} There exists a function $w_n$ coinciding with $g_n$ outside of $\Omega_n \setminus \Omega'_n$, and such that, on each interval $\, ]a_i,a'_i[ \, $ and $\, ]b'_i,b_i[ \, $,
\begin{itemize}
\item $w_n$ and $w'_n$ are bounded by constants depending only on $u$ and $u'$;
\item $w_n(a_i) = u(a_i)$, $w'_n(a_i) = u'(a_i)$ and $w'_n=0$ on a (small) open interval having $a'_i$ for upper bound;
\item $w_n(b_i) = u(b_i)$, $w'_n(b_i) = u'(b_i)$ and $w'_n=0$ on a (small) open interval having $b'_i$ for lower bound.
\end{itemize}
\end{lem}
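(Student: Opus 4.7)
The quantities to be constructed are only the values of $w_n$ on the transition intervals $]a_i, a'_i[$ and $]b'_i, b_i[$: elsewhere (on $\Omega'_n$ and on $I \setminus \Omega_n$) the constraint $w_n = g_n$ fixes them. These local constructions are independent on different intervals, and by a symmetry argument it is enough to describe the definition of $w_n$ on the left transition interval $]a_i, a'_i[$.

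A first attempt would be a polynomial Hermite interpolation between the endpoint data $(a_i, u(a_i), u'(a_i))$ and $(a'_i, g_n(a'_i), 0)$. This is doomed: because $\eps_n = a'_i - a_i$ can be arbitrarily small, the derivative of such an interpolant blows up like $1/\eps_n$, and the required bound on $||w'_n||_\infty$ depending only on $u$ and $u'$ is lost. The key observation is that the statement does \emph{not} prescribe any value of $w_n$ at $a'_i$: it only fixes the left-endpoint values of $w_n$ and $w'_n$, together with the vanishing of $w'_n$ on some small interval ending at $a'_i$. This leaves room to prescribe $w'_n$ directly as a bounded cutoff of $u'(a_i)$ and then recover $w_n$ by integration, which is what I would do.

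Concretely, I would fix once and for all a continuous nonincreasing function $\chi : [0,1] \to [0,1]$ with $\chi(0) = 1$ and $\chi \equiv 0$ on $[1/2, 1]$, and set for $x \in [a_i, a'_i]$
$$ w_n(x) = u(a_i) + u'(a_i) \int_{a_i}^x \chi\bigl((t-a_i)/\eps_n\bigr) \dx t. $$
Then $w_n$ is $C^1$ on $[a_i, a'_i]$, $w_n(a_i) = u(a_i)$, $w'_n(a_i) = u'(a_i)\chi(0) = u'(a_i)$, and $w'_n \equiv 0$ on $]a_i + \eps_n/2, a'_i[$, an interval whose upper bound is exactly $a'_i$. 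The sup bounds are immediate: $|w'_n(x)| \leq |u'(a_i)| \leq ||u'||_\infty$ on $[a_i, a'_i]$, and $|w_n(x)| \leq ||u||_\infty + |I|\cdot||u'||_\infty$, both depending only on $u$ and $u'$. The mirrored construction (reversing time and using $u(b_i)$, $u'(b_i)$) on $]b'_i, b_i[$ delivers the symmetric conditions at $b_i$ and $b'_i$. Once the change of viewpoint (prescribe $w'_n$ as a bounded cutoff, do not try to interpolate $w_n$ to the target $g_n(a'_i)$) is made, all the listed properties are direct verifications; I do not expect any remaining obstacle in this lemma.
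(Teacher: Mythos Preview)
Your proof is correct and follows essentially the same approach as the paper: both recognize that no value of $w_n$ is prescribed at $a'_i$, so one may prescribe $w'_n$ directly as a bounded profile decaying from $u'(a_i)$ to $0$ and recover $w_n$ by integration. The paper implements this via an explicit quadratic $Q$ (with $Q'$ linear from $u'(a_i)$ to $0$) followed by a rescaling to create the flat piece, whereas your use of a generic cutoff $\chi$ is a slightly cleaner packaging of the same idea.
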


\begin{proof} It is enough to replace $g_n$ on the interval $\, ]a_i,a_i+\eps_n[ \, $ by the function $x \mapsto Q(a_i+x)$ where
$$Q(t) = -\frac{u'(a_i)}{2\eps_n} t^2 + u'(a_i) t + u(a_i) $$
(so that $Q(0)=u(a_i)$, $Q'(0)=Q'(a_i)$ and $Q'(\eps_n)=0$), to scale the new function on the interval $\, ]a_i,a'_i[ \, $ by replacing it by
$$ x \mapsto \left\{ \begin{array}{ll}
w_n(a_i+2(x-a_i)) & \text{if } a_i \leq x \leq a_i+\eps_n/2 \\
w_n({a'_i}^-) & \text{otherwise}
\end{array} \right.$$
and to make a similar construction on the interval $\, ]b_i-\eps_n,b_i[$. \end{proof}

Since $g_n$ and $w_n$ are bounded uniformly in $n$ and coincide outside of the set $\Omega_n \setminus \Omega'_n$, whose measure is at most $1/n$, the sequence $(w_n)_n$ still converges to $u$ in $L^2_\mu$; moreover, we have
$$ ||w'_n||_{L^2_\mu(\Omega_n \setminus \Omega'_n)} \leq (2/n) ||u'||_\infty $$
and for any discontinuity point $y$ of $w_n$, $w_n$ is piecewise constant on a (small) neighborhood of $y$. We now have to regularize $w_n$ around its discontinuity points, which belong to $\Omega'_n$; this is possible with a small cost only if these points belong to the set $M$. For this reason we are interested by the following ``displacement'' procedure of the discontinuity points:

\begin{lem}
For any $n$, there exists a function $v_n$ such that
\begin{itemize}
\item $v_n = w_n$ outside of $\Omega'_n$;
\item $v_n$ is still piecewise constant on $\Omega'_n$;
\item any discontinuity point of $v_n$ belongs to $M$;
\item $v_n \to u$ in $L^2_\mu$.
\end{itemize}
\end{lem}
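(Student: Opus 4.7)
The plan is to modify $w_n$ only on the components of $\Omega'_n \setminus M$ so that every discontinuity of the new function is pushed onto $M$, exploiting the bound $\mu(\Omega'_n \setminus M) \leq 1/n$ (a consequence of the first lemma, since $\Omega'_n \subseteq \Omega_n$ and $\mu(\Omega_n \setminus M) \leq 1/n$) to keep the $L^2_\mu$-error under control.

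I would fix an interval $\, ]a'_i,b'_i[ \,$ of $\Omega'_n$ and list the connected components $\, ]\alpha_{i,l},\beta_{i,l}[ \,$ of the open set $\, ]a'_i,b'_i[ \, \setminus M$. Since $M$ is closed, each endpoint of such a component belongs to $M \cup \{a'_i,b'_i\}$; and since by the first lemma of this section $M \cap \, ]a'_i,b'_i[ \, \neq \emptyset$, no single component can touch both $a'_i$ and $b'_i$ simultaneously. Furthermore $w_n$ is piecewise constant with finitely many pieces on $\, ]a'_i,b'_i[ \,$, so only finitely many of the $\, ]\alpha_{i,l},\beta_{i,l}[ \,$ contain a discontinuity of $w_n$. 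On all the others $w_n$ is already constant and I leave it unchanged.

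On each component $\, ]\alpha_{i,l},\beta_{i,l}[ \,$ containing a discontinuity, I would replace $w_n$ by a single constant $c_{i,l}$, chosen as follows. If both endpoints lie in $M$, any value works (say $c_{i,l} = w_n(\alpha_{i,l}^+)$), since the resulting jumps at $\alpha_{i,l}$ and $\beta_{i,l}$ occur in $M$. If $\alpha_{i,l} = a'_i$, which forces $\beta_{i,l} \in M$, I take $c_{i,l} = w_n(a'_i^+)$: the previous lemma makes $w_n$ locally constant just to the left of $a'_i$ and permits matching the first value of $g_n$ at $a'_i$, so $w_n$ is continuous at $a'_i$ and no new jump appears there. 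The case $\beta_{i,l} = b'_i$ is treated symmetrically. Outside the modified components, and outside $\Omega'_n$, I set $v_n = w_n$. The resulting $v_n$ is piecewise constant on $\Omega'_n$ with finitely many pieces per interval $\, ]a'_i,b'_i[ \,$, it coincides with $w_n$ outside $\Omega'_n$, and each of its discontinuity points lies in $M$.

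For the convergence, $v_n$ and $w_n$ differ only on a subset of $\Omega'_n \setminus M$, and both are uniformly bounded by some constant $C$ independent of $n$ (inherited from the bound on $g_n$). Hence
$$ ||v_n - w_n||_{L^2_\mu}^2 \leq (2C)^2 \, \mu(\Omega'_n \setminus M) \leq 4C^2/n \longrightarrow 0, $$
which, combined with $w_n \to u$ in $L^2_\mu$ already established in the previous construction, yields $v_n \to u$ in $L^2_\mu$. The only delicate point is the selection of $c_{i,l}$ on the ``boundary'' components touching $a'_i$ or $b'_i$: a careless choice would introduce a spurious jump at a point outside $M$ and destroy the third condition. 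It is exactly the regularization performed in the previous lemma that leaves enough room to make a compatible choice of the replacement constant.
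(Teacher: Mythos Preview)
Your proposal is correct and follows essentially the same approach as the paper: both constructions modify $w_n$ only on $\Omega'_n\setminus M$, keep it piecewise constant, and control $\|v_n-w_n\|_{L^2_\mu}$ via the uniform bound on $w_n$ together with $\mu(\Omega'_n\setminus M)\le 1/n$. Your organisation through the connected components of $]a'_i,b'_i[\setminus M$ is a cleaner packaging of the paper's iterative ``walk'' from one point of $M$ to the next, but the idea, the set on which the modification takes place, and the final estimate are identical.
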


\begin{proof} We have to modify $w_n$ only on each interval $\, ]a'_i,b'_i[ \, $. On this interval, the number of discontinuity points of $w_n$ is finite; we denote these points by $a'_i \leq x_1 < \dots < x_n = b'_i$. We make the following construction:
\begin{itemize}
\item Let $m = \inf([a'_i,b'_i] \cap M)$. We define $v_n$ on the interval $[a'_i,m[ \, $ (if it is nonempty) by setting $v_n = w_n({a'_i}^+)$.
\item Then we reiterate the construction starting from $m$:
\begin{itemize}
\item if $\, ]m,b'_i[ \,  \cap M = \emptyset$, we set $v_n = w_n({b'_i}^-)$ on this interval, and we are done;
\item otherwise, let $m'=\inf(\, ]m,b'_i[ \,  \cap M)$. We have naturally $m' \geq m$. If $m \geq x_n$, then we set $v_n = w_n({b'_i}^-)$ on $\, ]m',b'_i[ \, $,$w_n$ on $[m,m'[ \, $ and we are done;
\item if $m = m' < x_n$, then we denote by $j$ the smallest index such that $x_j > m$, we set $v_n = w_n$ on $[m,x_j[ \, $ and we reiterate this construction starting from $x_j$;
\item finally, if $m < m' < x_n$, we set $v_n = w_n$ on $[m,m'[ \, $ and we reiterate this construction starting from $m'$.
\end{itemize}
\end{itemize}

With this construction, $w_n-v_n \neq 0$ only on $\Omega'_n$ and outside of the set $M$. Since $\mu(\Omega'_n \setminus M) \leq 1/n $ and $w_n, v_n$ are uniformly bounded, we get $||v_n-w_n||_{L^2_\mu} \leq C/n$, and we thus still have $v_n \to u$. Moreover, by construction, $v_n$ is still piecewise constant on the set $\Omega'_n$ and all its discontinuity points belong to $M$. \end{proof}

To finish, we have to modify $v_n$ around each discontinuity point, so that the new function $u_n$ is regular and admits a derivative which is small for the $L^2_\mu$-norm. This is possible thanks to the following result about embeddings between functional spaces:

\begin{lem} Let $J$ a bounded interval of $\R$, and $\mu$ a finite measure on $J$ with density $f > 0$. The following assertions are equivalent:
\begin{enumerate}
\item The function $1/f$ belongs to $L^1(J)$
\item The space $L^2_\mu(J)$ is continuously embedded into $L^1(J)$
\end{enumerate}
\end{lem}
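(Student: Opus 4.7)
The implication $(1) \Rightarrow (2)$ is already done in the introduction of the paper: applying the Cauchy--Schwarz inequality to the decomposition $|u| = (|u|\sqrt{f}) \cdot (1/\sqrt{f})$ yields
$$\int_J |u(x)| \dx x \;\leq\; \left(\int_J |u(x)|^2 f(x)\dx x\right)^{1/2} \left(\int_J \frac{\dx x}{f(x)}\right)^{1/2},$$
so that the embedding constant is $\|1/f\|_{L^1(J)}^{1/2}$.

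The interesting direction is $(2) \Rightarrow (1)$, which I would prove by a contradiction-style argument using a truncation of the candidate extremizer $u = 1/f$. Assume the continuous embedding holds with constant $C$, i.e.\ $\|u\|_{L^1(J)} \leq C \|u\|_{L^2_\mu(J)}$ for every $u \in L^2_\mu(J)$. For each $N \in \N$, set $u_N(x) = \min(1/f(x), N)$, which is a nonnegative bounded (hence $L^2_\mu$) function. The key computation is that since $0 \leq u_N \leq 1/f$, we have $u_N \cdot f \leq 1$, and therefore
$$\|u_N\|_{L^2_\mu(J)}^2 \;=\; \int_J u_N(x)^2 f(x) \dx x \;=\; \int_J u_N(x)\,(u_N(x)f(x)) \dx x \;\leq\; \int_J u_N(x) \dx x \;=\; \|u_N\|_{L^1(J)}.$$

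Plugging into the embedding inequality gives $\|u_N\|_{L^1(J)} \leq C \|u_N\|_{L^2_\mu(J)} \leq C \sqrt{\|u_N\|_{L^1(J)}}$, hence $\|u_N\|_{L^1(J)} \leq C^2$ for every $N$. Since $u_N \nearrow 1/f$ pointwise, the monotone convergence theorem gives $\int_J 1/f \dx x \leq C^2 < +\infty$, which is exactly $(1)$.

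The only mildly delicate point is ensuring the truncation lies in $L^2_\mu$ so that the assumed embedding can be applied; but this is immediate since $u_N \leq N$ and $\mu(J)$ is finite. No other obstacle arises, and the argument also gives the sharp embedding constant $\|1/f\|_{L^1(J)}^{1/2}$.
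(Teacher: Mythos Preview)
Your proof is correct and considerably more elementary than the paper's. For $(2)\Rightarrow(1)$ the paper argues by contrapositive: assuming $\int_J 1/f = +\infty$, it slices $J$ into the level sets $E_n=\{1/(n+1)\le f<1/n\}$ with $l_n=\leb^1(E_n)$, reduces the question to producing a sequence $(u_n)$ with $\sum u_n^2\,n\,l_n<\infty$ but $\sum |u_n|\,l_n=+\infty$, and then invokes the Banach--Steinhaus theorem on the family of partial-sum functionals $T_N:(u_n)\mapsto\sum_{n\le N}l_n u_n$ to conclude. Your truncation $u_N=\min(1/f,N)$ together with the self-improving estimate $\|u_N\|_{L^2_\mu}^2\le\|u_N\|_{L^1}$ bypasses all of this machinery and, as you note, identifies the sharp embedding constant $\|1/f\|_{L^1}^{1/2}$. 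One small remark in the other direction: the paper's construction actually exhibits an element of $L^2_\mu\setminus L^1$, hence shows that even the \emph{set-theoretic} inclusion fails when $1/f\notin L^1$; this is the form used downstream in the proof of Theorem~2.1, where one needs functions with prescribed $L^1$-integral but arbitrarily small $L^2_\mu$-norm. Your version yields the same conclusion after an obvious scaling step, since failure of the continuous embedding means the ratio $\|g\|_{L^1}/\|g\|_{L^2_\mu}$ is unbounded over $L^2_\mu$.
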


\begin{proof} The direct implication is obvious and comes directly from the Cauchy-Schwarz inequality. For the converse one, let us assume that $\int_J 1/f = +\infty$ and set
$$E_n = \left\{t \in J : \frac{1}{n+1} \leq f(t) < \frac{1}{n} \right\} \quad \text{and} \quad l_n = \leb^1(E_n). $$
We know that $\sum\limits_n l_n < +\infty$ (it is the length of $J$) and
$$ \sum\limits_n nl_n = \sum\limits_n \int_J n \, \un_{\{n-1 \leq 1/f \leq n\}} \geq \sum\limits_n \int_J \frac{1}{f} \, \un_{\{n-1 \leq 1/f \leq n\}} \geq \int_J \frac{1}{f} = +\infty $$
thus $\sum\limits_n nl_n = +\infty$. We will build a function $U$ which is constant on each set $E_n$, belongs to $L^2_\mu$ and does not belong to $L^1$. If we denote by $u_n$ the value of $U$ on $E_n$, it is equivalent to find a sequence $(u_n)_n$ verifying
$$ \sum\limits_n u_n^2 \, (nl_n) < +\infty \quad \text{and} \quad \sum\limits_n |u_n| \, l_n = +\infty $$
To summarize, we want to prove the following statement: for any sequence $(l_n)_n$ of positive numbers such that $\sum\limits_n n l_n = +\infty$ and $\sum\limits_n l_n < +\infty$, there exists a sequence $(u_n)_n$ of positive numbers such that $\sum\limits_n u_n^2 (nl_n) < +\infty$ and $\sum\limits_n nu_n = +\infty$. By contraposition, it is equivalent to the following: for any sequence $(l_n)_n$ of positive numbers such that $\sum\limits_n l_n < +\infty$, if the following implication holds:
$$ \left( \sum\limits_n u_n^2 \,(n l_n) < + \infty \right) \Rightarrow \left(\sum\limits_n l_n |u_n| < + \infty \right)
 $$
then we have $\sum\limits_n n l_n < +\infty$. This result can be seen as a corollary of the Banach-Steinhaus theorem. Denoting by $\ell^2_{nl_n}$ the space of sequences $(u_n)_n$ such that $\sum\limits_n u_n^2 \,(n l_n) < + \infty$, the operator
$$ T_N : u \in \ell^2_{nl_n} \longmapsto \sum\limits_{n=0}^N l_n u_n  $$
is linear continuous with norm $\left(\sum\limits_{n=0}^N nl_n\right)^{1/2}$ and the assumption about $(l_n)_n$ is equivalent to
$$\forall u \in \ell^2_{nl_n} \quad \sup\limits_{N \in \N} |T_N(u)| < + \infty.$$
By Banach-Steinhaus theorem, we get $\sup\limits_{N \in \N} ||T_N|| < +\infty$ and $\sum\limits_{n \in \N} nl_n < +\infty$; the proof is complete.
\end{proof}

\noindent {\bf End of the proof of Theorem 2.1.} Thanks to the two last lemmas, we are now able to transform the function $w_n$ into a $C^1$ function $u_n$, which will provide us our approximation of $u$. Let us recall that $v_n$ coincides with $u$ outside $\Omega_n$, is piecewise constant on $\Omega'_n$, all its discontinuity points are located in $M$ and each of this points admits a neighborhood where $v_n'$ is null. Denoting by $y_1 < \dots < y_p$ the discontinuity points of $v_n$, we find $\eps_n$ such that, for each $j$, $v_n$ is constant on $\, ]y_j-\eps_n,y_j[ \, $ and $\, ]y_j,y_j+\eps_n[ \, $. Moreover, we have:
$$ \sum\limits_{j=1}^p \mu(\, ]y_j-\eps_n,y_j+\eps_n[ \, ) \leq 2p\eps_n + \sum\limits_{j=1}^p \mu(\{y_j\}). $$
We take $\eps_n$ small enough so that $2p \eps_n \leq 1/n$. On the other hand, since each $y_j$ does not belong to the set $\{x_1,\dots,x_n\}$ of the ``big atoms'' of $\mu$, we have
$$ \sum\limits_{j=1}^p \mu(\{y_j\}) \leq \sum\limits_{k \geq n} \mu(\{x_k\}). $$
Therefore,
$$ \mu \left( \bigcup\limits_{j=1}^p \, ]y_j-\eps_n,y_j+\eps_n[ \,  \right) \xrightarrow[n \to +\infty]{} 0. $$
On the interval $\, ]y_j-\eps_n,y_j+\eps_n[ \, $, thanks to Lemma 2.4, $L^2_\mu$ is not embedded into $L^1$, thus we can find a regular function $g_j$ such that
$$ \int_{y_j-\eps_n}^{y_j+\eps_n} g_j = v_n(y_j^+)-v_n(y_j^-) \quad \text{and} \quad \int_{y_j-\eps_n}^{y_j+\eps_n} g_j^2 \dx\mu \leq \frac{1}{nq}. $$
Then, we set
$$ u_n(x) = \left \{ \begin{array}{ll}
\tilde{v}_n(y_j-\eps_n) + \int_{y_j-\eps_n}^x g_j & \text{ if } y_j-\eps_n \leq x \leq y_j+\eps_n \\
v_n(x) & \text{ otherwise.}
\end{array}
\right. $$
This functions $u_n$ form our desired approximation of $u$:

\begin{prop} This sequence $(u_n)_n$ satisfies $u_n \to u$ and $u'_n \to v$ in the space $L^2_\mu$, where

$$v(x) =  \left\{ \begin{array}{ll} u'(x) & \text{if } x \notin M \text{ or is an atom of } \mu  \\ 0 & \text{otherwise.} \end{array} \right.$$
Consequently, $T_\mu = \{0\}$ $\mu$-a.e.\@ on $M$. \end{prop}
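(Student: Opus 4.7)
The plan is to verify the two $L^2_\mu$-convergences and then extract the tangent-space conclusion via the third item of the Fact. For $u_n \to u$, I would triangulate through $v_n$, which converges to $u$ in $L^2_\mu$ by the previous lemma. Since $u_n$ and $v_n$ differ only on $E_n := \bigcup_{j=1}^p \,]y_j-\eps_n,y_j+\eps_n[\,$ and the estimate just above the construction gives $\mu(E_n) \to 0$, it suffices to show that $u_n$ is uniformly bounded on $E_n$. To this end I would refine the choice of each $g_j$ by applying Lemma 2.4 with the additional requirement that $g_j$ have constant sign matching that of the jump $v_n(y_j^+) - v_n(y_j^-)$ (permissible, since Lemma 2.4 actually furnishes a positive function in $L^2_\mu \setminus L^1$, which can be rescaled); then the partial integrals $\int_{y_j-\eps_n}^x g_j$ stay between $0$ and the jump, so $u_n(x)$ remains between the uniformly bounded values $v_n(y_j^\pm)$. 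Therefore $\|u_n - v_n\|_{L^2_\mu}^2 \leq (2C)^2\mu(E_n) \to 0$, where $C$ is a uniform bound for $\|v_n\|_\infty$.

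For the convergence of the derivatives, I would partition $I$ and estimate $\int |u'_n - v|^2 \dx\mu$ on each piece. Outside $\Omega_n$ the functions $u_n$ and $u$ coincide and every point is either outside $M$ or is an atom $x_k$ with $k\le n$; in both cases $v(x) = u'(x) = u'_n(x)$, so the contribution vanishes. On the endpoint-transition zone $\Omega_n \setminus \Omega'_n$, both $u'_n = w'_n$ and $u'$ are uniformly bounded by Lemma 2.2 while $\mu(\Omega_n \setminus \Omega'_n) \leq 1/n$, giving an $O(1/n)$ contribution. On $\Omega'_n \setminus E_n$ one has $u'_n = 0$ since $v_n$ is piecewise constant there with every discontinuity absorbed into $E_n$; the target $v$ is nonzero only at atoms or on $\Omega'_n \setminus M$, and since the atoms remaining in $\Omega'_n$ are confined to $\{x_k : k > n\}$ (the tail of a summable series) and $\mu(\Omega'_n \setminus M) \leq 1/n$, one obtains $\int_{\Omega'_n \setminus E_n}|v|^2\dx\mu \to 0$. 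On $E_n$ finally, $u'_n$ equals $g_j$ and $\sum_j \int g_j^2 \dx\mu \leq p/(nq) \to 0$ (with $q$ chosen proportional to $p$), while $v$ is bounded on the vanishing-measure set $E_n$, so both $u'_n$ and $v$ vanish in $L^2_\mu$ on $E_n$.

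Combined, these show $v \in \Gamma(u)$ for any $u \in C^1(\overline{I})$. To conclude about $T_\mu$ on $M$, I would specialize to $u(x) = x$: the constant sequence $u_n = u$ shows $u' \equiv 1$ lies in $\Gamma(u)$, and the construction produces a second gradient $v$ equal to $1$ outside $M \cup A$ and equal to $0$ on $M \setminus A$. The difference $u' - v = \un_{M \setminus A}$ then belongs to $\Gamma(0)$ and is strictly positive $\mu$-a.e.\@ on $M \setminus A$, so the third case of the Fact yields $T_\mu = \{0\}$ $\mu$-a.e.\@ on $M \setminus A$. Combined with the result of the second part of the proof on $A$, this gives the full statement $T_\mu = \{0\}$ on $M \cup A$ of Theorem 2.1.

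The delicate step will be the bound on $\Omega'_n \setminus E_n$: it is the only place where all the layered choices of the construction pay off simultaneously. One needs the discontinuities of $v_n$ to lie in $M$ (Lemma 2.3), the enclosing set $\Omega_n$ to avoid the first $n$ atoms of $\mu$ (Lemma 2.1), and the shrunken set $\Omega'_n$ to satisfy $\mu(\Omega'_n\setminus M) \leq 1/n$. Without any one of these ingredients, $v$ would retain non-vanishing $L^2_\mu$-mass on this central piece and the derivative convergence would fail.
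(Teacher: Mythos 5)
Your argument is correct and follows essentially the same route as the paper: the same triangulation through $v_n$ for the $L^2_\mu$-convergence of $u_n$, the same ``small mass times uniform bound'' estimates plus the $\int g_j^2\dx\mu \leq 1/(nq)$ control for the derivatives, and the conclusion on $M$ via item 3 of the Fact applied to $u(x)=x$. Your two refinements --- choosing each $g_j$ of constant sign so that $u_n$ stays pinched between $v_n(y_j^-)$ and $v_n(y_j^+)$ on the transition intervals, and splitting the domain into the four pieces $I\setminus\Omega_n$, $\Omega_n\setminus\Omega'_n$, $\Omega'_n\setminus E_n$ and $E_n$ rather than the paper's coarser two --- actually make the argument tighter than the published one, which simply asserts the uniform boundedness of $(u_n)_n$ and $(u'_n)_n$ without justifying it on the intervals where $u'_n=g_j$.
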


\begin{proof} We know that $v_n \to u$, thus $u_n \to u$ in the space $L^2_\mu$ outside of the intervals $\, ]y_j-\eps_n,y_j+\eps_n[ \, $. But since the total mass of these intervals goes to $0$ and $(u_n)_n$ is uniformly bounded, we get $u_n \to u$. For the derivative, since $u_n=u$ outside of $\Omega_n$, we have
$$ ||u'_n-v||_{L^2_\mu}^2 = ||u'_n-v||^2_{L^2_\mu(\Omega_n)}  = ||u'_n-v||^2_{L^2_\mu(\Omega_n \setminus M)} + ||u'_n-v||^2_{L^2_\mu(M \setminus \{x_1,...,x_n\})} $$
where the first term goes to 0 (since $(u_n)_n$ is uniformly bounded and $\mu(\Omega_n \setminus M)$ goes to 0); for the second one, we have $v=0$ on $M$, thus it is enough to prove that $u'_n$ goes to $0$ for the $L^2_\mu$-norm on $M \setminus \{x_1,\dots,x_n\}$; this term is bounded by
$$ ||u'_n||_{L^2_\mu(\Omega'_n \setminus \{y_1,\dots,y_p\})}^2 + \sum\limits_{j=1}^p u'_n(y_j) \mu(\{y_j\}).  $$
Since $(u'_n)_n$ is uniformly bounded, we know that the second term goes to $0$, and since $u_n$ is constant outside of the intervals $\, ]y_j-\eps_n,y_j+\eps_n[ \, $ the first one is equal to
$$\sum\limits_{j=1}^p \int_{y_j-\eps_n}^{y_j+\eps_n} g_j^2 d\mu$$
which, by definition of $g_j$, is smaller than $1/n$. This completes the proof. \end{proof}

\section{Application to a transport problem with gradient penalization}

\subsection{Problem statement, and the easiest case}

We investigate the following problem, which is somehow intermediate between optimal transportation and elasticity theory:

$$ \inf\left\{ \int_\Omega (|T(x)-x|^2 + |\nabla T(x)|^2) \dx \mu(x) \right\} \, , $$
where the infimum is taken among all maps $T:\Omega \to \R^d$ with prescribed image measure $T_\# \mu = \nu$ and admitting a Jacobian matrix $\nabla T$ in a suitable sense. Contrary to the Monge-Kantorovich optimal transport problem, if $\mu$ has a density $f$ bounded from above and below, then the existence of a solution is obvious and comes from the direct method of the calculus of variations; more precisely:

\begin{prop} Let $f:\Omega \to \R^d$ a measurable function such that $0 < c < f < C < +\infty$ for some constants $c,C>0$. Let $\nu \in \mathcal{P}(\R^d)$. We assume that there exists at least one Sobolev transport map between $\dx\mu = f \cdot \dx\leb^d$ and $\nu$. Then the problem
$$ \inf \left\{ \int_\Omega (|T(x)-x|^2 + |\nabla T(x)|^2) f(x) \dx x : T \in H^1(\Omega), \, T_\#\mu = \nu \right \} $$
admits at least one solution.
\end{prop}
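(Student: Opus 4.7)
Since $f$ is bounded from above and below by positive constants, the weighted $L^2_f$ and $H^1_f$ norms are equivalent to the classical $L^2$ and $H^1$ ones, and the natural functional space is simply $H^1(\Omega)^d$. The strategy is the direct method of the calculus of variations: extract a good limit from a minimizing sequence, verify that the constraint passes to the limit, and conclude by weak lower semi-continuity of the functional.

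\textbf{Step 1 --- A priori bound and weak compactness.} The hypothesis that an admissible Sobolev map exists guarantees that the infimum is finite. Take a minimizing sequence $(T_n)_n$. Using $c \le f \le C$, we have
$$J(T_n) \;\ge\; c \left(\|T_n - \operatorname{id}\|_{L^2(\Omega)}^2 + \|\nabla T_n\|_{L^2(\Omega)}^2\right),$$
and since $\Omega$ is bounded, $\operatorname{id} \in L^2(\Omega)^d$, so $(T_n)_n$ is bounded in $H^1(\Omega)^d$. By reflexivity, I extract a subsequence (still denoted $(T_n)_n$) with $T_n \rightharpoonup T$ weakly in $H^1(\Omega)^d$. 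Rellich's theorem then yields strong convergence in $L^2(\Omega)^d$, and a further extraction produces pointwise Lebesgue-a.e.\@ (hence $\mu$-a.e.) convergence.

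\textbf{Step 2 --- Admissibility of the limit.} I have to check that $T_\#\mu = \nu$. For any $\phi \in C_b(\R^d)$, the sequence $\phi \circ T_n$ is uniformly bounded and converges a.e.\@ to $\phi \circ T$; since $f \in L^1(\Omega)$, dominated convergence gives
$$ \int_{\R^d} \phi \dx(T_{n\#}\mu) = \int_\Omega \phi(T_n) f \dx x \;\longrightarrow\; \int_\Omega \phi(T) f \dx x = \int_{\R^d} \phi \dx(T_\#\mu). $$
Since $T_{n\#}\mu = \nu$ for every $n$, we obtain $T_\#\mu = \nu$.

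\textbf{Step 3 --- Lower semi-continuity and conclusion.} The strong $L^2$-convergence of $T_n$ immediately gives
$\int_\Omega |T_n - x|^2 f \dx x \to \int_\Omega |T - x|^2 f \dx x$. For the gradient term, the map $\phi \mapsto \int_\Omega |\phi|^2 f \dx x$ is convex and continuous on $L^2(\Omega)^{d \times d}$ (its square root is equivalent to the usual $L^2$-norm), hence weakly lower semi-continuous; applied to $\nabla T_n \rightharpoonup \nabla T$ this yields $\int_\Omega |\nabla T|^2 f \dx x \le \liminf_n \int_\Omega |\nabla T_n|^2 f \dx x$. Summing the two contributions, $J(T) \le \liminf_n J(T_n) = \inf J$, so $T$ is a minimizer.

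The only point requiring genuine care is Step~2: the constraint $T_\#\mu = \nu$ is preserved because pointwise a.e.\@ convergence combined with the integrable dominating weight $f$ allows dominated convergence against bounded continuous test functions. This is precisely the step that becomes delicate once $f$ is allowed to degenerate or $\mu$ has a singular part, which motivates the more refined analysis of the following sections; here, the uniform bounds on $f$ make the argument routine.
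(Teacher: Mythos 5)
Your proof is correct and follows essentially the same route as the paper: direct method, Rellich compactness to get a.e.\@ convergence, passage to the limit in the image-measure constraint against bounded continuous test functions, and weak lower semi-continuity in $H^1$ for the gradient term. You simply make explicit the a priori bound and the convexity argument for semicontinuity, which the paper leaves implicit.
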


\begin{proof} Let $(T_n)_n$ be a minimizing sequence. We can extract from $(T_n)_n$ a sequence having, thanks to the Rellich theorem, a strong limit $T$ in $L^2$, and we also can assume that $T_n \to T$ $\leb^1$-a.e.\@ on $\Omega$, thus $\mu$-a.e.\@ on $\Omega$. Then for any function $\phi \in C_b(\R^d)$ we have
$$ \forall n \in \N \quad \int_\Omega \phi(T_n(x)) \dx\mu(x) = \int_{\R^d} \phi(y) \dx\nu(y). $$
Thanks to the pointwise $\mu$-a.e.\@ convergence of $(T_n)_n$, we can pass to the limit in the left-hand-side of this equality, which gives
$$ \forall \phi \in C_b(\R^d) \quad \int_\Omega \phi(T(x)) \dx\mu(x) = \int_{\R^d} \phi(y) \dx\nu(y) $$
and $T$ satisfies the constraint on the image measure. Moreover, the functional that we consider is of course lower semicontinuous with respect to the weak convergence in $H^1(\Omega)$, and $T$ minimizes our problem. \end{proof}

\subsection{The general formulation, and the one-dimensional case}

If $\mu$ is a generic Borel measure, we replace the term with the jacobian matrix of $T$ by $\nabla_\mu T$, so that our problem is now written
\begin{equation} \inf \left\{ \int_\Omega (|T(x)-x|^2+|\nabla_\mu T(x)|^2) \dx\mu(x) \; : \; T \in H^1_\mu(\Omega) \right\}. \label{1} \end{equation}

The existence of solutions is not clear in general. In the case of the classical Sobolev space $H^1(\Omega)$, we have seen that the key point to prove the existence is the following: from any minimizing sequence $(T_n)_n$ we can extract a sequence which converges $\leb^d$-a.e.\@ on $\Omega$, and this is enough to obtain that the limit is admissible. This is not possible in general, since we don't have any equivalent of Rellich compactness theorem for the Sobolev spaces with respect to a generic measure $\mu$.

In the one-dimensional case, if $\mu$ is the Lebesgue measure, it is known that the monotone transport map between $\leb^1$ and $\nu$ is optimal for the problem \eqref{1} (see \cite{ls}). This result does not hold if we do not make any assumption of $\mu$, but we can get an existence result thanks to the $\mu$-a.e.\@ compactness result of the second section:

\begin{theo} In dimension 1, the problem \eqref{1} admits at least one solution. \end{theo}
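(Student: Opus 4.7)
Let $(T_n)_n$ be a minimising sequence. Since $\Omega$ is bounded, the transport cost controls $\|T_n\|_{L^2_\mu}^2$ (via $|T_n|^2 \leq 2|T_n-x|^2 + 2|x|^2$), and the penalisation bounds $\|\nabla_\mu T_n\|_{L^2_\mu}$; hence $(T_n)_n$ is bounded in $H^1_\mu(I)$. Write $V = I \setminus (M \cup A)$. By Proposition 2.1 the restrictions $T_n|_V$ are bounded in the reflexive Hilbert space $H^1_f(V)$, and by Proposition 2.2 I can extract a subsequence (not relabelled) converging pointwise $\mu$-a.e.\@ on $V$ to some measurable function $T_V$; a further extraction gives $T_n|_V \rightharpoonup T_V$ weakly in $H^1_f(V)$, the two limits being compatible through strong convergence in $L^1_{\mathrm{loc}}(V)$.

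Next I handle the behaviour on $M \cup A$. The measures $T_{n\#}(\mu|_{M\cup A})$ are all dominated by $\nu$, so they form a tight family and, up to another extraction, converge narrowly to some finite positive measure $\sigma$ on $\R$ with $\sigma(\R) = \mu(M \cup A)$. On the other hand, pointwise convergence on $V$ and dominated convergence against bounded continuous test functions give $T_{n\#}(\mu|_V) \rightharpoonup^* (T_V)_\#(\mu|_V)$, so the relation $T_{n\#}\mu = \nu$ forces $(T_V)_\#(\mu|_V) = \nu - \sigma$. I then define $T$ on $I$ by $T = T_V$ on $V$ and by letting $T|_{M\cup A}$ be the monotone rearrangement sending $\mu|_{M\cup A}$ onto $\sigma$. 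By construction $T_\#\mu = \nu$; since $T|_V \in H^1_f(V)$, $T \in L^2_\mu$, and the tangential gradient vanishes automatically on $M \cup A$ (where $T_\mu = \{0\}$), Proposition 2.1 yields $T \in H^1_\mu(I)$.

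It remains to show $J(T) \leq \liminf_n J(T_n)$. Split the integrals according to $I = V \sqcup (M \cup A)$. On $V$, Fatou's lemma gives $\int_V |T_V - x|^2 \dx\mu \leq \liminf_n \int_V |T_n - x|^2 \dx\mu$, and lower semicontinuity of the $L^2_f$ norm under weak convergence gives $\int_V |T_V'|^2 \dx\mu \leq \liminf_n \int_V |T_n'|^2 \dx\mu$. On $M \cup A$ the gradient contribution is zero for every $T_n$ and for $T$; viewing $(\mathrm{id},T_n)_\#(\mu|_{M\cup A})$ as transport plans between $\mu|_{M\cup A}$ and $T_{n\#}(\mu|_{M\cup A})$, the standard lower semicontinuity of the quadratic Kantorovich cost under narrow convergence of marginals yields $\liminf_n \int_{M\cup A} |T_n - x|^2 \dx\mu \geq W_2^2(\mu|_{M\cup A}, \sigma)$, and this last quantity equals $\int_{M\cup A} |T - x|^2 \dx\mu$ because in dimension one the monotone rearrangement is Wasserstein-optimal. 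Summing the inequalities proves the semicontinuity and shows that $T$ is a minimiser.

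The main obstacle, and the reason the statement is restricted to dimension one, is the set $M \cup A$, on which Proposition 2.2 gives no control of $T_n$ whatsoever. The decisive point is that on this set the tangential gradient vanishes identically, so the penalisation exerts no pressure there and only the transport term survives; this turns the construction of $T|_{M \cup A}$ into a one-dimensional optimal transport problem between two fixed finite measures, which is solved explicitly by the monotone rearrangement. Neither the pointwise compactness of Proposition 2.2 nor this monotone rearrangement trick has a counterpart in $\R^d$ for $d > 1$, which is exactly why the existence question in higher dimension remains open.
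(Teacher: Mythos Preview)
Your argument is correct and follows the paper's strategy exactly: split $I=V\sqcup(M\cup A)$, use the compactness of Proposition~2.2 together with weak $H^1_f$--compactness on $V$, and exploit on $M\cup A$ that the tangential gradient vanishes so that only a one--dimensional Monge problem remains, to be solved by the monotone rearrangement. The only technical difference lies in the order of operations on $M\cup A$: the paper first replaces each $U_n$ by the nondecreasing map $\tilde U_n$ from $\mu|_{M\cup A}$ to $(U_n)_\#(\mu|_{M\cup A})$, then passes to a pointwise limit via compactness of monotone functions, so that the whole modified sequence $T_n$ converges $\mu$--a.e.\ on $I$ and the constraint $T_\#\mu=\nu$ follows directly; you instead pass the target measures to a narrow limit $\sigma$, invoke lower semicontinuity of $W_2$, and only then produce the monotone map onto $\sigma$. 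Both routes work and yield the same $T$; your version is perhaps a shade cleaner since it replaces the ``compactness of nondecreasing maps'' step by the standard lower semicontinuity of the Wasserstein distance, while the paper's version has the advantage that the image constraint is obtained in one stroke from $\mu$--a.e.\ convergence rather than by reassembling the two pieces $(T_V)_\#(\mu|_V)+\sigma=\nu$.
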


\begin{proof} Let us begin by rewriting precisely the functional that we consider in this case: we know that $T_\mu = \{0\}$ on $M \cup A$ and $\R$ on $V$, so that we are now minimizing
$$ J : U \in H^1_\mu(I) \longmapsto \int_{V} ((U(x)-x)^2+U'(x)^2) f(x) \dx x + \int_{M \cup A} (U(x)-x)^2 \dx\mu(x). $$
Let $(U_n)_n$ be a minimizing sequence. On the set $V$, which is exactly the set where $T_\mu$ is~$\R$, we can extract from $(U_n)_n$ a $\mu$-a.e.\@ (which means $\leb^1$-a.e.\@ wherever $f \neq 0$) pointwise convergent subsequence, whose limit is denoted by $U$; let us remark that $U$ is the weak limit of $(U_n)_n$ (up to a subsequence) in the space $H^1_f$, and by semicontinuity, we have
$$ \int_V ((U(x)-x)^2+U'(x)^2) f(x)\dx x \leq \liminf \left(\int_V ((U_n(x)-x)^2+U_n'(x)^2) f(x)\dx x \right). $$
Moreover, let us set, for $n \in \N$, $\nu_n = (U_n)_\#(\mu|_{M \cup A})$ and $\tilde{U}_n$ the optimal transport map for the Monge-Kantorovich quadratic cost between the measures $\mu|_{M \cup A}$ and $\nu_n$. It is well-known that $\tilde{U}_n$ is the unique nondecreasing transport map between $\mu|_{M \cup A}$ and $\nu_n$; because of compactness properties of nondecreasing maps, we can assume that $(\tilde{U}_n)_n$ admits, for the $\mu$-a.e.\@ convergence, a limit $\tilde{U}$. For any $n$, thanks to the optimality of $\tilde{U}_n$, we have
$$ \int_{M \cup A} (\tilde{U}_n(x)-x)^2 \dx\mu(x) \leq \int_{M \cup A} (U_n(x)-x)^2 \dx \mu(x) $$
and by semicontinuity
$$ \int_{M \cup A} (\tilde{U}(x)-x)^2 \dx \mu(x) \leq \liminf \left(\int_{M \cup A} (U_n(x)-x)^2 \dx \mu(x) \right). $$
Thus, if we denote by
$$ T_n(x) = \left\{ \begin{array}{ll}
U_n(x) & \text{if } x \in V \\
\tilde{U}_n(x) & \text{if } x \in M \cup A
\end{array} \right.
\qquad \text{and} \qquad
T(x) = \left\{ \begin{array}{ll}
U_(x) & \text{if } x \in V \\
\tilde{U}(x) & \text{if } x \in M \cup A
\end{array} \right. $$
we have $T_n \to T$ $\mu$-a.e.\@ on $I$, and
$$ J(T) \leq \liminf \left(\int_V ((U_n(x)-x)^2+U_n'(x)^2) f(x)\dx \right) +  \liminf \left(\int_{M \cup A} (U_n(x)-x)^2 \dx \mu(x) \right) = \liminf J(U_n) $$
where $(U_n)_n$ is a minimizing sequence for $J$ on the set of $H^1_\mu$ transport maps between $\mu$ and $\nu$. Thus, it is enough to prove that $T$ satisfies the constraint on image measure to conclude. But for each $n$, by construction, $(T_n)_\# \mu = \nu$ and the $\mu$-a.e.\@ convergence allows to obtain the same for the limit~$T$; the proof is complete. \end{proof}

\noindent {\bf Remark.} This result can be generalized to any functional $J: U \mapsto \int_\Omega (L_1(x,U(x)) + L_2(\nabla_\mu U(x))) \dx\mu(x)$, where $L_1$ and $L_2$ have one of the following forms:
\begin{itemize}
\item $L_1$ is a transport cost such that the nondecreasing map is optimal for the Monge-Kantorovich problem: it is the case if $L_1(x,u) = h(|x-u|)$, where $h$ is a convex function. Let us notice that in particular the statement holds if we study the problem of minimization of the norm of the gradient among all Sobolev transport maps (this corresponds to $L_1 = 0$). Of course we need to assume that the class
$$ \left \{U \in H^1_\mu : U_\# \mu = \nu \text{ and } \int_\Omega L_1(x,U(x)) \dx\mu(x) < +\infty \right \} $$
is nonempty (to guarantee that $J \not\equiv +\infty$ on the set of admissible functions). Thanks to the quadratic structure of $H^1_\mu$, this is automatically the case if $L_1$ is the quadratic cost and there exists a Sobolev transport map.
\item $L_2$ is ``quadratic'', so that the space where we study the problem is actually the Sobolev space $H^1_\mu$. The natural cases are $L_2(\nabla_\mu U) = |\nabla_\mu U|^2$ or $|\nabla_\mu U - I_d|^2$, where $I_d$ is the identity matrix (in this last case, we can consider the functional $U \mapsto ||U-\text{id}||_{H^1_\mu}$, which is a Sobolev version of the quadratic transport problem where we minimize $||U-\text{id}||_{L^2_\mu}$).
\end{itemize}

\subsection{Difficulties and partial results in any dimension}

As we said in the second section of this paper, we don't have a precise pointwise description of the $\mu$-Sobolev space $H^1_\mu(\Omega)$ if $\Omega$ is an open set of $\R^d$, which was the key point for the compactness result. More precisely, the following results still hold in any dimension:

\begin{itemize}
\item Outside of the set
$$ M = \left\{ x \in \Omega : \forall \eps > 0, \, \int_{\Omega \cap B(x,\eps)} \frac{\dx x}{f(x)} = +\infty \right\} $$
we have $T_\mu = \R^d$, a.e.\@ for the regular part of $\mu$. The proof is identical to the one-dimensional case, based on the Cauchy-Schwarz inequality and the embedding $L^2_f(\Omega \setminus M) \into L^1_{loc}$.
\item Of course, we don't have anymore $T_\mu = 0$ for the singular part of $\mu$: for instance, if $\mu$ is uniform and supported on a segment, then $\mu$ is singular, and we know that $\operatorname{dim} T_\mu = 1$ on any point. However, the tangent space on the atoms of $\mu$ is known:

\begin{prop} If $x_0$ is an atom of $\mu$, then $T_\mu(x_0) = \{0 \}$. \end{prop}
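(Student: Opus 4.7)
The plan is to exhibit, for each canonical basis vector $e_i$, an element $v_i \in \Gamma(0)$ whose value at the atom $x_0$ is precisely $e_i$. Since the tangent space satisfies $v(x_0) \perp T_\mu(x_0)$ for every $v \in \Gamma(0)$ (this is the pointwise orthogonality characterization recalled in Section~1, and it is a genuine pointwise statement at $x_0$ because $\mu(\{x_0\})>0$), the existence of these $d$ gradients-of-zero forces $T_\mu(x_0) \subseteq \operatorname{Vect}(e_1,\dots,e_d)^\perp = \{0\}$.

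To construct $v_i$, I would fix $\phi_n \in C^\infty_c(\Omega)$ with $\phi_n(x_0)=1$, $\operatorname{supp}\phi_n \subseteq B(x_0,1/n)$, and $|\nabla\phi_n|\leq Cn$ (possible for $n$ large enough, since $x_0$ lies in the open set $\Omega$), and set
$$u_n^i(x) = \phi_n(x)\bigl((x-x_0)\cdot e_i\bigr). $$
Then $u_n^i \in C^\infty(\overline\Omega)$, $u_n^i(x_0)=0$, and a direct computation gives $\nabla u_n^i(x_0)=e_i$ as well as the uniform bound $|\nabla u_n^i|\leq 1+C$ on all of $\Omega$, with both $u_n^i$ and $\nabla u_n^i$ vanishing outside $B(x_0,1/n)$. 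Furthermore $|u_n^i(x)|\leq 1/n$ everywhere.

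The convergence in $L^2_\mu$ is then straightforward. Let $v_i$ be the vector field on $\Omega$ defined (up to $\mu$-negligible sets) by $v_i(x_0)=e_i$ and $v_i \equiv 0$ elsewhere. Since $\mu$ is finite and $B(x_0,1/n)$ is decreasing with intersection $\{x_0\}$, one has $\mu(B(x_0,1/n)\setminus\{x_0\}) \to 0$ as $n \to \infty$. Consequently
$$ \|u_n^i\|_{L^2_\mu}^2 \leq \tfrac{1}{n^2}\,\mu(\Omega)\xrightarrow[n\to\infty]{}0, \qquad \|\nabla u_n^i - v_i\|_{L^2_\mu}^2 \leq (1+C)^2\,\mu(B(x_0,1/n)\setminus\{x_0\})\xrightarrow[n\to\infty]{}0, $$
where for the second bound one uses $\nabla u_n^i(x_0) = e_i = v_i(x_0)$, so the atom contributes nothing. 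This shows $v_i \in \Gamma(0)$.

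The only subtle point I foresee is ensuring that the target $v_i$ is a legitimate element of $(L^2_\mu)^d$ with the prescribed pointwise value at $x_0$: this is exactly where the atomic hypothesis $\mu(\{x_0\})>0$ is used, since otherwise the value of $v_i$ at a single point would be $\mu$-negligible and irrelevant. With this in hand, $e_i = v_i(x_0) \perp T_\mu(x_0)$ for every $i$, concluding the proof.
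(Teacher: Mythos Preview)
Your proof is correct and follows essentially the same route as the paper's: both construct $u_n(x) = \phi_n(x)\,\langle x - x_0, e\rangle$ with $\phi_n$ a shrinking cutoff centered at the atom, then show $u_n \to 0$ and $\nabla u_n \to e\,\un_{\{x_0\}}$ in $L^2_\mu$ using that $\mu(B(x_0,r)\setminus\{x_0\}) \to 0$. The only cosmetic difference is that you loop over the canonical basis vectors while the paper takes an arbitrary unit vector $e$.
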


\begin{proof} Let us prove it if $x_0 = 0$. We want to build a sequence of functions $(u_n)_n$ such that
$$ u_n \to 0 \quad \text{and} \quad \nabla u_n \to e \, \un_{\{0\}} $$
where $e$ is an arbitrary unit vector (this shows that any unit vector belongs to the space $T_\mu(0)^\perp$, thus $T_\mu(0) = \{0\}$). For this goal, let us consider a smooth cutoff function $\chi$ such that $0 \leq \chi \leq 1$ and
$$ \chi_n(x) = 1 \text{ if } 0 \leq |x| \leq 1 \text{ and } \chi_n(x) = 0 \text{ if } |x| \geq 2 $$
and we denote by $\chi_n(x) = \chi(nx)$. We then set $u_n(x) = \langle x,e \rangle \, \chi_n(x)$ and show that $(u_n)_n$ is the function that we are looking for. First, noting that $u_n(0) = 0$, that $u_n$ is null outside of $B(0,2/n)$ and that $0 \leq u_n \leq 1$ for any $n$, we have
$$ ||u_n||_{L^2_\mu} = ||u_n||_{L^2\mu(\Omega \setminus x_0)} \leq \mu(B(0,2/n))-\mu(\{0\}) $$
which goes to $0$ as $n \to +\infty$; this gives us $u_n \to 0$ in $L^2_\mu$. Second, for any~$n$, we have
$$ \nabla u_n(x) = \chi_n(x) \, e + \langle e,x \rangle \, \nabla \chi_n(x) = \chi_n(x) \, e + \langle e,x \rangle \, n \nabla\chi(nx). $$
Let us notice that $\nabla u_n(0) = e$ for any $n$, thus it is enough to prove that $||\nabla u_n||_{L^2_\mu (\Omega \setminus \{0\})} \to 0 $. But $\chi_n$ and $\nabla \chi_n$ are null outside of $B(0,2/n)$ and if $0 < |x| \leq 2/n$ we have
$$ |\nabla u_n(x)| \leq |e| |\chi(nx)| + |\langle e,x \rangle| n |\nabla \chi(x)| \leq C(1+n|x|) \leq 3C $$
where $C$ is an upper bound of $\chi$ and $\nabla \chi$. Thus, $(\nabla u_n)_n$ is uniformly bounded by a positive constant, and $\nabla u_n - e\, \un_{\{0\}}$ is supported on the set $B(0,2/n) \setminus \{0\}$ whose measure $\mu$ goes to $0$. This completes the proof. \end{proof}
\item Finally, we can prove that there exists absolutely continuous measure $\mu$ such that $T_\mu$ is neither $\{0\}$ nor $\R^n$ on any point of $\Omega$. We provide an explicit example:
\begin{prop} Let be $g : \,]0,1[ \,  \to \, ]0,+\infty[ \, $ such that $\int_Jg = +\infty$ for any open interval $J \subseteq \, ]0,1[ \, $. Let $\Omega = \, ]0,1[ \, ^2$, $f:(x,y) \in \Omega \mapsto g(x)$, and $\mu$ the measure with density $f$. Then the tangent space is the vertical line $\R \cdot e_2$ on $\mu$-a.e.\@ point of $\Omega$.\end{prop}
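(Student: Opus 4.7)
The claim $T_\mu = \R \cdot e_2$ breaks into two pointwise inclusions, $\R \cdot e_2 \subseteq T_\mu$ and $T_\mu \subseteq \R \cdot e_2$, which I treat separately. Recall $T_\mu(x)^\perp = \operatorname{Vect}(\xi_1(x), \xi_2(x))$ with $\xi_i = p_{\Gamma(0)}(e_i)$ the $L^2_\mu$-projection of the constant vector field $e_i$ onto $\Gamma(0)$: the first inclusion is equivalent to $(\xi_1)_2 = (\xi_2)_2 = 0$, and the second to $\xi_1$ (or $\xi_2$) being pointwise nonzero along~$e_1$.

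The first inclusion I obtain from a Fubini argument: any $v = (v_1, v_2) \in \Gamma(0)$ satisfies $v_2 = 0$ $\mu$-a.e. Indeed, writing $\dx\mu = g(x) \dx x \dx y$ and choosing $(u_n) \subset C^\infty(\overline{\Omega})$ with $u_n \to 0$ and $\nabla u_n \to v$ in $L^2_\mu$, after extracting a subsequence we have for $\leb^1$-a.e.\ $x$ the convergences $u_n(x, \cdot) \to 0$ and $\partial_y u_n(x, \cdot) \to v_2(x, \cdot)$ in $L^2((0,1), \dx y)$. Testing such a slice against $\varphi \in \D((0,1))$ in the $y$-variable yields $v_2(x, \cdot) \equiv 0$, and by Fubini $v_2 = 0$ $\mu$-a.e. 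Applied to $\xi_1, \xi_2 \in \Gamma(0)$ this gives $(\xi_i)_2 = 0$, hence $T_\mu^\perp \subseteq \R \cdot e_1$ and $T_\mu \supseteq \R \cdot e_2$.

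For the reverse inclusion I exhibit $e_1$ itself as an element of $\Gamma(0)$, by choosing $u_n(x,y) = \phi_n(x)$ with $(\phi_n) \subset C^\infty([0,1])$ satisfying $\phi_n \to 0$ and $\phi_n' \to 1$ in $L^2((0,1), g \dx x)$; then Fubini yields $u_n \to 0$ in $L^2_\mu$ and $\nabla u_n = (\phi_n', 0) \to e_1$ in $L^2_\mu$, forcing $\xi_1 = e_1$ and $T_\mu \subseteq (\R \cdot e_1)^\perp = \R \cdot e_2$. The existence of such a sequence $(\phi_n)$ is a purely one-dimensional statement, equivalent to the tangent space of the measure $g \dx x$ on $(0,1)$ being $\{0\}$ a.e., i.e.\ to the corresponding 1D critical set $M$ filling the whole interval. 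This reduces to the third part of the proof of Theorem~2.1, whose sawtooth-plus-smoothing construction---with Lemma~2.4 used to absorb, at vanishing $L^2(g \dx x)$-cost, the jumps of a piecewise-linear approximation of the identity---I would adapt here. This 1D construction is the main obstacle of the proof; the first inclusion is by contrast a routine Fubini consequence of $\mu$ being Lebesgue in the $y$-direction.
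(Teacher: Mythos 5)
Your proof is correct and follows essentially the same route as the paper: the inclusion $\R\cdot e_2\subseteq T_\mu$ via the fact that every $v\in\Gamma(0)$ has $v_2=0$ (the paper integrates by parts in $y$ globally, using that the density is independent of $y$, where you slice by Fubini --- same idea), and the reverse inclusion by producing $e_1$ as a gradient of $0$ from the one-dimensional degeneracy of $g\,\dx x$, which is exactly the paper's use of the sequence $w_n$ from Theorem~2.1. Note only that the hypothesis should be read as $\int_J \dx t/g(t)=+\infty$ on every subinterval (otherwise $\mu$ is not finite), as your reduction to the 1D critical set correctly presumes.
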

\begin{proof} We first show that $T_\mu(x)$ is at most one-dimensional on $\mu$-a.e.\@ $x \in \Omega$. Let be $u(x,y)=x$. Since the tangent space of the measure $g(x) \cdot \leb^1$ on $\, ]0,1[ \, $ is $\{0\}$, we can find a sequence of functions $(w_n)_n$ such that
$$ \int_{0}^1 |w_n(x)-x|^2 g(x) \dx x \to 0 \quad \text{and} \quad \int_0^1 |w'_n(x)|^2 g(x) \dx x \to 0 $$
and we denote by $u_n(x,y)=w_n(x)$. It is clear that $\nabla u_n \to 0$ and $u_n \to u$ in $L^2_\mu$. Then, $\nabla u = (1,0)$ but $\nabla_\mu u = (0,0)$ $\mu$-a.e.\@ on $\Omega$. This would impossible if $T_\mu$ was $\R^2$ on a non-negligible set of $\Omega$.

We now have to show that $\R \cdot e_2$ is included to $T_\mu(x)$ for $\mu$-a.e.\@ $x$. For this, we prove that any element $v=(v_1,v_2)$ of $\Gamma(0)$ satisfies $v_2 = 0$. Indeed, we have $v_2 = \lim \partial_2 u_n$ with $u_n \to 0$. For any test function $\phi$, integrating by parts with respect to $y$ (since the density of $\mu$ depends only on $x$)~gives
$$ \int_\Omega \partial_2 u_n \phi \dx\mu = -\int_\Omega u_n \partial_2\phi \dx\mu $$
which goes to $0$ since $u_n \to 0$ in $L^2_\mu$. This gives $\int_\Omega v_2 \phi \dx\mu = 0$ for any $\phi \in \mathcal{D}(\Omega)$, thus $v_2 = 0$, and the proof is complete. \end{proof}

This shows that we cannot hope to obtain a compactness result analogous to the one-dimensional case, where any bounded sequence in $H^1_\mu$ has a subsequence which converges on $\mu$-a.e.\@ $x$ such that $T_\mu(x) \neq \{0\}$: it is enough to take a sequence of functions $(u_n)_n$ depending only on~$x$ and non-compact for the a.e.\@ convergence.\end{itemize}

\end{document}